\DeclareMathOperator{\CAT}{CAT}
\DeclareMathOperator{\Vol}{Vol}
\DeclareMathOperator{\PSL}{PSL}
\newcounter{comhar}
\newcounter{comdave}
\begin{document}

\title[Entropy rigidity for convex projective manifolds]{Entropy rigidity for finite volume
strictly convex projective manifolds}

\author{
Harrison Bray and David Constantine
}
\date{\today}

\maketitle

\theoremstyle{plain} \newtheorem{thm}{Theorem}[section]
\theoremstyle{plain} \newtheorem{theorem}[thm]{Theorem}
\theoremstyle{plain} \newtheorem{conj}[thm]{Conjecture}
\theoremstyle{plain} \newtheorem{lemma}[thm]{Lemma}
\theoremstyle{plain} \newtheorem{prop}[thm]{Proposition}
\theoremstyle{plain} \newtheorem{proposition}[thm]{Proposition}
\theoremstyle{plain} \newtheorem{cor}[thm]{Corollary}	
\theoremstyle{plain} \newtheorem{corollary}[thm]{Corollary}	
\theoremstyle{definition} \newtheorem{defn}[thm]{Definition}
\theoremstyle{definition} \newtheorem{definition}[thm]{Definition}	
\theoremstyle{remark} \newtheorem{rmk}[thm]{Remark}
\theoremstyle{remark} \newtheorem{remark}[thm]{Remark}
\theoremstyle{remark} \newtheorem{obs}[thm]{Observation}
\theoremstyle{remark} \newtheorem*{quest}{Question}

\begin{abstract}
We prove entropy rigidity for finite volume strictly convex projective manifolds in dimensions $\geq 3$, generalizing the work of \cite{abc} to the finite volume setting. The rigidity theorem uses the techniques of Besson, Courtois, and Gallot's entropy rigidity theorem. It implies uniform lower bounds on the volume of any finite volume strictly convex projective manifold in dimensions $\geq 3$.
\end{abstract}


%

\section{Introduction}

In this note we prove the following theorem, which generalizes Theorem 1.10 of \cite{abc} to the finite volume setting:

\begin{thm}\label{thm:main}
Let $Y_\Omega$ be a finite volume, strictly convex projective manifold of dimension $n\geq 3$, equipped with its Hilbert metric. Suppose that $Y_0$ is a hyperbolic structure on the same manifold. Then there is a number $N(F_\Omega)\geq 1$ such that
\[ N(F_\Omega) h(F_\Omega)^n \Vol(Y, F_\Omega) \geq h(g_0)^n \Vol(Y, g_0).\]
Furthermore, equality holds if and only if $(Y, F_\Omega)$ is isometric to $(Y, g_0)$.
\end{thm}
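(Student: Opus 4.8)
The plan is to run the Besson--Courtois--Gallot (hereafter BCG) natural map construction, in the projective-geometric form developed in \cite{abc}, with the new work concentrated in the noncompact (cusped) setting. Write $\Omega$ for the universal cover of $Y_\Omega$, a properly and strictly convex domain carrying the Hilbert metric $F_\Omega$, and let $\Gamma = \pi_1(Y)$ act by projective transformations preserving $\Omega$; the same group acts on $\mathbb{H}^n$ as the holonomy of the hyperbolic structure $g_0$. The first task is to set up the Patterson--Sullivan theory for $F_\Omega$: for $s$ slightly larger than the critical exponent, which in finite volume I would identify with the volume entropy $h(F_\Omega)$, fix the conformal density $\{\mu_x^s\}_{x\in\Omega}$ on $\partial\Omega$ determined by $\Gamma$-equivariance and the Busemann cocycle
\[
\frac{d\mu_x^s}{d\mu_y^s}(\xi) = e^{-s\,B^{\Omega}_\xi(x,y)},
\]
where $B^{\Omega}_\xi$ is the Hilbert Busemann function. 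Existence, finiteness, and non-atomicity of this density, and the identification of its dimension with the volume entropy, rely on the known description of the cusp geometry of finite volume strictly convex projective manifolds and on the (ergodic, divergence-type) behavior of the associated geodesic flow, together with a parabolic gap at the cusps. Strict convexity of $\Omega$ is what supplies the boundary regularity and the dynamical inputs here, as well as a $\Gamma$-equivariant boundary identification $\Theta\colon \partial\Omega \to \partial\mathbb{H}^n$.

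Next I would build the natural map in the direction $\Omega \to \mathbb{H}^n$, so that the barycenter is taken in the symmetric space where the Busemann Hessian is explicit. For each $x\in\Omega$, push forward $\nu_x^s := \Theta_\ast \mu_x^s$ to $\partial\mathbb{H}^n$ and define $F_s(x)\in\mathbb{H}^n$ as the unique minimizer of
\[
y \longmapsto \int_{\partial\mathbb{H}^n} B^{0}_\theta(y)\, d\nu_x^s(\theta),
\]
where $B^{0}_\theta$ is the hyperbolic Busemann function; uniqueness is immediate from the strict convexity of $B^0_\theta$ on $\mathbb{H}^n$. The map $F_s$ is smooth and $\Gamma$-equivariant, hence descends to $\bar F_s\colon Y_\Omega \to Y_0$. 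Differentiating the implicit equation characterizing the barycenter and feeding the resulting symmetric forms into the BCG calibration inequality bounds the Jacobian of $F_s$, measured from the Hilbert volume of $F_\Omega$ on the source to the hyperbolic volume on the target, by $\left(s/h(g_0)\right)^n$ up to a multiplicative constant. The factor $N(F_\Omega)\ge 1$ enters precisely at this conversion: since $F_\Omega$ is Finsler rather than Riemannian, comparing the measure-theoretic density appearing in the BCG estimate to the genuine Hilbert volume costs a constant measuring how far the Hilbert unit balls deviate from ellipsoids, with $N(F_\Omega)=1$ exactly when $F_\Omega$ is Riemannian.

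The heart of the argument, and the step I expect to be the main obstacle relative to the compact case of \cite{abc}, is the passage to finite volume. Two things must be checked: that $\bar F_s$ is proper with degree one, and that the change of variables is valid on a noncompact manifold. For this I would analyze $F_s$ in the cusps, using that the cusps of $Y_\Omega$ and of $Y_0$ are asymptotically comparable and that $\mu_x^s$ charges no parabolic fixed point, to show that $\bar F_s$ maps cusp neighborhoods of $Y_\Omega$ properly into cusp neighborhoods of $Y_0$ with no long excursions; properness then makes the degree well defined, and homotopy to the identity gives $\deg(\bar F_s)=1$. Exhausting $Y_\Omega$ by compact sets, with the boundary contributions controlled by the uniform Jacobian bound and the finiteness of $\Vol(Y,F_\Omega)$, yields
\[
h(g_0)^n\,\Vol(Y,g_0) = h(g_0)^n \int_{Y_\Omega} |\operatorname{Jac}\bar F_s|\, d\mathrm{vol}_{F_\Omega} \le s^n\, N(F_\Omega)\, \Vol(Y,F_\Omega),
\]
and letting $s \downarrow h(F_\Omega)$ gives the stated inequality.

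Finally, for the equality case I would trace back through the Jacobian estimate. Equality forces $N(F_\Omega)=1$ together with pointwise saturation of the BCG inequality; thus $F_\Omega$ must be Riemannian, and $DF_s$ a homothety intertwining $F_\Omega$ and $g_0$. Since the only Riemannian Hilbert metric on a properly convex domain is the hyperbolic metric of the Beltrami--Klein model, $(Y,F_\Omega)$ is then a finite volume hyperbolic structure with $h(F_\Omega)=h(g_0)=n-1$ and, by the equality just established, the same volume as $(Y,g_0)$. Mostow rigidity for finite volume hyperbolic manifolds in dimension $n\ge 3$ then promotes the homotopy equivalence to an isometry $(Y,F_\Omega)\cong(Y,g_0)$, which is exactly where the hypothesis $n\ge 3$ is used.
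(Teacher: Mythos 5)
Your overall architecture is the same as the paper's: push forward Patterson--Sullivan measures under the boundary extension of the homotopy equivalence, take barycenters in $\mathbb{H}^n$, bound the Jacobian up to a Finsler eccentricity factor, and settle equality via rigidity of Riemannian Hilbert geometries plus Mostow--Prasad. However, the step you yourself call the heart of the argument --- properness at the cusps --- is asserted rather than argued, and it is exactly the main content of the paper. Invoking ``asymptotically comparable cusps'' and the fact that ``$\mu_x^s$ charges no parabolic fixed point'' supplies no mechanism: non-atomicity only guarantees that the barycenter is well defined; it does nothing to prevent $\bar F_s$ from wandering out of the cusp. What is actually needed is a quantitative concentration statement: for any halfspace $H\subset \tilde Y_0$ whose ideal boundary contains the parabolic point $\tilde\Theta$, every $x$ with $\tilde f(x)$ deep enough in a horoball at $\tilde\Theta$ satisfies $\tilde f_*\mu_x(\partial_\infty H) > \frac{2}{3}\Vert\mu_x\Vert$. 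The paper proves this (Lemma \ref{lem:two thirds}) by covering $\partial_\infty \tilde Y_0\smallsetminus\partial_\infty H$ by translates of a fundamental domain for a parabolic element $\gamma$, telescoping their masses with the transformation rule (Lemma \ref{lem:PS rule}), and making $d_\Omega(x,\gamma x)$ arbitrarily small deep in the cusp via the osculating-ellipsoid description of parabolic fixed points (Theorem \ref{thm:ellipsoids}, Lemma \ref{lem:short loops in cusps}). One also needs the companion estimate (Lemma \ref{lem:control_bar}) that any measure placing more than $\frac{2}{3}$ of its mass on $\partial_\infty H$ has barycenter within a uniform distance $D$ of $H$. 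Without these two lemmas your plan cannot be executed.

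There is a second genuine error in how you extract degree one: you claim properness of $\bar F_s$ makes the degree well defined and that ``homotopy to the identity gives $\deg(\bar F_s)=1$.'' On noncompact manifolds degree is invariant only under \emph{proper} homotopies; two proper maps joined by a non-proper homotopy can have different degrees (on $\mathbb{R}$, the identity and $x\mapsto -x$ are homotopic through $x\mapsto (2t-1)x$, which is constant at $t=\tfrac12$). This is why the paper constructs the specific homotopy $\Psi_t(x)=bar\left(t\tilde f_*\mu_x+(1-t)\nu_{\tilde f(x)}\right)$ and proves the homotopy itself is proper (Proposition \ref{prop:proper hty}), which in turn requires the same $\frac{2}{3}$-concentration for the visual measures (Lemma \ref{lem:control_visual}), not just for the Patterson--Sullivan family. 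Two smaller points: your $N(F_\Omega)$ is a supremum over the noncompact manifold $Y_\Omega$ and could a priori be infinite, making the inequality vacuous; the paper removes this by measuring eccentricity against the Blaschke metric, which Benoist--Hulin show is comparable to $F_\Omega$ with a constant depending only on $n$ (Proposition \ref{prop:blaschke}, Lemma \ref{lem:eccentricities_are_bounded}). And $n\geq 3$ is not used only in Mostow rigidity: the sharp Jacobian bound of Proposition \ref{prop:Jacobian bound} itself requires $n\geq 3$.
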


Here, $h$ refers to volume growth entropy of metric balls in the universal cover with respect to the given metric. While interesting in its own right, Theorem \ref{thm:main} has a corollary of particular geometric interest.

\begin{cor} \label{cor:uniform_volume_bound}
There is a constant $\mathcal{D}>0$, depending only on dimension, such that if $Y$ is a finite volume, strictly convex projective manifold of dimension at least three which also admits a hyperbolic structure $g_0$, then
\[ \Vol(Y,F_\Omega) \geq \mathcal{D} \Vol(Y, g_0).\]
\end{cor}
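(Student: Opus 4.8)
The plan is to derive Corollary~\ref{cor:uniform_volume_bound} directly from Theorem~\ref{thm:main} by controlling the two entropy quantities that appear in the rigidity inequality. Rearranging the conclusion of the theorem gives
\[ \Vol(Y, F_\Omega) \geq \frac{h(g_0)^n}{N(F_\Omega)\, h(F_\Omega)^n} \Vol(Y, g_0),\]
so the task reduces to producing a lower bound for the coefficient $h(g_0)^n / (N(F_\Omega) h(F_\Omega)^n)$ that depends only on the dimension $n$. **First I would** handle the numerator: for a hyperbolic structure $g_0$ on an $n$-manifold the volume growth entropy is the universal constant $h(g_0) = n-1$, independent of the particular structure, so $h(g_0)^n = (n-1)^n$ contributes a purely dimensional factor and causes no difficulty.

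**Next I would** bound the Hilbert-metric entropy $h(F_\Omega)$ from above by a constant depending only on $n$. This is the crucial input: a strictly convex projective manifold's Hilbert metric satisfies $h(F_\Omega) \leq n-1$, with this bound a known feature of the convex projective setting (the Hilbert entropy is maximized in the hyperbolic/ellipsoid case and otherwise strictly smaller). Combined with the fact that $N(F_\Omega) \geq 1$ is bounded above by a dimensional constant in the finite-volume situation, this gives $N(F_\Omega)\, h(F_\Omega)^n \leq C(n)$ for some $C(n)$ depending only on dimension. Setting
\[ \mathcal{D} = \frac{(n-1)^n}{C(n)} > 0 \]
then yields the claimed inequality $\Vol(Y, F_\Omega) \geq \mathcal{D}\,\Vol(Y, g_0)$ with $\mathcal{D}$ depending only on dimension, as required.

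**The main obstacle** I anticipate is obtaining a \emph{uniform} upper bound on the combined factor $N(F_\Omega) h(F_\Omega)^n$ over the entire class of admissible structures, rather than for a fixed one. The entropy bound $h(F_\Omega) \leq n-1$ is standard, but verifying that the auxiliary constant $N(F_\Omega)$ from Theorem~\ref{thm:main} admits a dimensional upper bound --- and in particular does not blow up as $\Omega$ degenerates or as one approaches the cusps in the finite-volume setting --- requires unwinding exactly how $N(F_\Omega)$ is constructed in the proof of the main theorem. If $N(F_\Omega)$ turns out not to be uniformly bounded, the argument would instead need the stronger statement that the \emph{product} $N(F_\Omega) h(F_\Omega)^n$ remains controlled, which would have to be read off from the rigidity argument's calibration/pushforward construction directly.
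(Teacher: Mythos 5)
Your overall route is the same as the paper's: rearrange the main inequality, use $h(g_0)=n-1$ together with the entropy bound $h(F_\Omega)\le n-1$ (which the paper takes from \cite[Theorem 2]{tholozan17}), and then bound the eccentricity factor $N(F_\Omega)$ by a constant depending only on dimension. However, the step you flag as ``the main obstacle'' is precisely the mathematical content of the corollary, and you assert it rather than prove it: you state that $N(F_\Omega)$ ``is bounded above by a dimensional constant in the finite-volume situation'' while conceding that you do not know how to verify this. That is a genuine gap, not a routine detail. The paper explicitly warns (in the remark following Definition \ref{defn:distortion}) that for an arbitrary Riemannian reference metric $g$ on the non-compact manifold $Y_\Omega$ the quantity $N(F_\Omega,g)$ may be infinite, in which case Theorem \ref{thm:main} holds trivially and the corollary does \emph{not} follow. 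So the corollary cannot be extracted by treating $N(F_\Omega)$ as a black box attached to the main theorem; everything hinges on a good choice of reference metric.

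The missing idea is the paper's choice of the Blaschke (affine) metric $A_\Omega$ as that reference. It is projectively invariant, hence descends to $Y_\Omega$, and by Benoist--Hulin (Proposition \ref{prop:blaschke}, resting ultimately on Benz\'ecri's compactness theorem) there is a constant $K_n$ depending only on $n$ such that $K_n^{-1}A_\Omega\le F_\Omega\le K_n A_\Omega$ at \emph{every} point of \emph{every} properly convex domain $\Omega$. This pointwise-uniform comparison is exactly what defeats your worry about blow-up in the cusps or as $\Omega$ degenerates: the constant is uniform over all pairs of domain and point, with no compactness of the quotient needed. A short computation (Lemma \ref{lem:eccentricities_are_bounded}) then gives $N(F_\Omega)=N(F_\Omega,A_\Omega)\le K_n^{2n}$, and the paper takes $\mathcal{D}=K_n^{-2n}$ (the factor $\left(h(g_0)/h(F_\Omega)\right)^n\ge 1$ is simply dropped). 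With this input your argument closes; without it, it does not.
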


In particular, $\Vol(Y,g_0)$ is a constant in dimension $n\geq 3$ by Mostow-Prasad rigidity \cite{prasad}, and hence we obtain a universal lower bound on volume.

In \cite{abc} a second corollary of Theorem \ref{thm:main} was that if $Y_\Omega$ is deformed so that $h(F_\Omega)\to 0$, then $\Vol(Y,F_\Omega) \to \infty$. This no longer applies in the finite volume, non-compact setting for the following reason. In the setting of finite volume strictly convex projective manifolds, the volume growth entropy agrees with the critical exponent of the Poincar\'e series for the action of the group on the universal cover \cite[Th\'eor\`eme 9.2]{cramponmarquis14}. Moreover, for the case of an $n$-manifold, the critical exponent is bounded below by $\frac{n-1}2$ (see \cite[Lemme 9.4]{cramponmarquis14}, or for a proof in english, the case $n=2$ is proven in \cite[Lemma 4.3.3]{cramponthese} and generalizes to higher dimensions) which is the critical exponent of a maximal rank parabolic subgroup acting on hyperbolic $n$-space. Thus, it is not possible for the volume growth entropy to be arbitrarily small, unlike what is known to occur in some cocompact examples in small dimensions (see \cite[Theorem 1.4 \& Corollary 1.6]{nie}, or \cite[Corollary 3.7]{zhang_degeneration} for arbitrary surfaces).  It is possible that for any family of representations for which the volume growth entropy converges to $\frac{n-1}{2}$, then the volume of the quotient must diverge to infinity. There is some experimental evidence of this behavior in dimension 2, and the volume does diverge to infinity with the entropy\footnote{see experiemental work and images generated by Marianne DeBrito, Andrew Nguyen, and Marisa O'Gara as a part of the LoGM program at the University of Michigan here: \url{https://gitlab.eecs.umich.edu/logm/wi20/entropy-project-outputs}}, but the result does not immediately follow Theorem \ref{thm:main}.

\subsection{Background}

Theorem \ref{thm:main} is inspired by the work of Besson, Courtois, and Gallot on entropy rigidity in the Riemannian setting \cite{BCG1, BCG2}. They prove that on a compact manifold supporting a negatively-curved locally symmetric metric, that metric strictly minimizes $h(g)^n\Vol(Y,g)$ among all Riemannian metrics $g$. Their method of proof -- the \emph{barycenter method} -- is now a fundamental tool with far-reaching impact; it will be the method of this paper. Their work was extended to finite volume Riemannian metrics in \cite{BCS} and \cite{storm}; the arguments of \cite{BCS} accomplishing this extension are the model for the present work. A nice survey of the barycenter method can be found in \cite{CFrecent}.

Strictly convex projective manifolds are a natural place to look for analogues of rigidity theorems which rely on the dynamics and geometry of negatively curved spaces. Hyperbolic manifolds are the first examples of strictly convex projective manifolds, realized as quotients of the Beltrami-Klein model of hyperbolic space, but unlike for hyperbolic geometry on $n\geq 3$ manifolds, the deformation space of strictly convex projective structures on a manifold can be nontrivial. Most relevant to this work, there are examples in every dimension of nontrivial moduli spaces of strictly convex projective structures of finite volume, which arise as deformations of the hyperbolic model via the Johnson-Millson bending construction \cite{johnsonmillson}, constructed by \cite{ballasmarquis, marquis12}. There are other known deformable examples in dimension 3 that arise from a generalization of Thurston's gluing equations \cite[Theorem 0.4]{ballascasella}.

These nonhyperbolic strictly convex projective structures admit a Finsler geometry via the Hilbert metric, which retains some but not all traits from hyperbolic space. For example, strict convexity of the geometry is equivalent to rel hyperbolicity of the fundamental group, and hence Gromov-hyperbolicity of the metric space, when the action has finite covolume \cite[Theorem 0.15]{clt}. On the other hand, the Hilbert geometry is not even $\CAT(0)$ in general. Though strictly convex projective structures are coarsely hyperbolic, our assumption that the quotient admits a hyperbolic metric is nontrivial and presumably necessary; there are closed manifolds in any dimension greater than three which admit strictly convex projective structures, but do not admit a hyperbolic metric \cite{Ben5,Kapovich2007}. 

The Hilbert metric is compatible with a projectively invariant Hausdorff measure called the Busemann-Hausdorff measure which determines our notion of finite volume, though we note that finite volume is well-defined for any choice of projectively invariant volume measure and refer the reader to the nice survey of Marquis for more details \cite{marquis_handbook}. (See \cite{Hilberthandbook} and the essays therein for other fundamentals and a survey of the area.)

Together with Adeboye, the authors proved Theorem \ref{thm:main} and its corollaries for closed, strictly convex projective manifolds in \cite{abc}. As in \cite{abc}, the particularly nice geometric properties of the Hilbert metric simplify some portions of the proofs when compared with the general Riemannian case. In particular, the equality case of Theorem \ref{thm:main} has a much simpler proof in our setting, and the geometry of the cusps is much more well-controlled, which allows us to mimic the argument of \cite{BCS} when extending to finite volume, rather than the more complicated but more general argument of \cite{storm}.

%

\subsection{Outline of the paper}

In Section \ref{sec:prelim} we define the basic objects involved in the argument and collect a few important facts about them. In Section \ref{sec:geometric lemmas}, we prove a number of geometric lemmas which will be used in the proof.  Section \ref{sec:bcg} contains a quick review of the Besson--Courtois--Gallot argument and proofs of our main theorems, conditional on a specific map between $(Y,F_\Omega)$ and $(Y,g_0)$ being proper. Showing that this `natural map' is proper is the key step in moving from the compact to finite-volume setting and the main work of this paper; it is carried out in Section \ref{sec:proper}. Finally, in Section \ref{sec:main} we complete the proof of Theorem \ref{thm:main} and its corollaries.

%

\subsection{Acknowledgements}

We would like to thank Dick Canary and Ralf Spatzier for helpful conversations about this project. D.C. would like to thank the University of Michigan for hosting him on a short visit during which a portion of this work was completed.

%

\section{Preliminaries}\label{sec:prelim}

Let $\Omega$ be a properly convex domain in $\mathbb{RP}^n$, meaning there exists an affine chart in which $\Omega$ is bounded. Then $\Omega$ is strictly convex if in such an affine chart, the intersection of its topological boundary $\partial \Omega$ with any line in the complement of $\Omega$ is at most one point. The Hilbert metric on any properly convex domain is defined as follows; chose an affine chart in which $\Omega$ is bounded, and for any $x,y\in\Omega$, take $a,b$ to be the intersection points in $\partial \Omega$ of any projective line containing $x$ and $y$. Then
\[
  d_\Omega(x,y)=\frac{1}{2} \left| \log [a:x:y:b] \right|
\]
where $[a:x:y:b]=\frac{|a-y||b-x|}{|a-x||b-y|}$ denotes the Euclidean cross-ratio in an affine chart, a projective invariant.

Let $F_\Omega$ be the Finsler metric on $\Omega$ induced by the Hilbert distance. This metric induces a projectively invariant volume form -- the Hilbert volume -- and we will denote volumes computed with this volume form by $\Vol(-,F_\Omega)$. (See \cite{abc} for the definition of this volume.)

Suppose that $\Gamma$ acts freely and properly discontinuously on $\Omega$ by projective transformations, which are isometries of the Hilbert metric, so that $Y=\Omega/\Gamma$ is a manifold with $\Vol(Y,F_\Omega)<+\infty$. In \cite{abc} the case where $Y$ is compact was handled, so we assume throughout that $Y$ is finite volume but not compact.

Although we work specifically with the Hilbert volume, we note that it is a straightforward consequence of Benz\'ecri's compactness theorem any two projectively invariant volumes on $\Omega$ are equivalent (up to bounded multiplicative bounds, see \cite[Prop. 9.4]{marquis_handbook}). In particular (see \cite[Cor. 9.5]{marquis_handbook}), finiteness of the volume of $Y$ does not depend on the specific choice of volume.

We assume further that $Y$ supports a hyperbolic metric $g_0$. We will denote the Hilbert and hyperbolic structures on $Y$ by $Y_\Omega=(Y, F_\Omega)$ and $Y_0=(Y, g_0)$, respectively.  As the underlying manifold for these two structures is the same, there is a homeomorphism between them; in particular this is a \emph{proper} map. Let us denote this map by $f:Y_\Omega \to Y_0$. We mark the universal covers of these spaces and lifts of various objects to these universal covers using $\sim$'s. For example, $\tilde f\colon \tilde Y_\Omega \to \tilde Y_0$ is the $\Gamma$-equivariant lift of $f$.

$\partial_\infty\tilde Y_-$ denotes the boundary at infinity. We note that $\tilde f$ extends to a $\Gamma$-equivariant homeomorphism between $\partial_\infty\tilde Y_\Omega$ and $\partial_\infty\tilde Y_0$, which we also denote by $\tilde f$. We will denote a cusp of $Y_-$ by $\Theta$ and use $\tilde \Theta$ to refer to a point in $\partial_\infty\tilde Y_-$ to which that cusp is asymptotic when lifted to the universal cover.

For any closed, convex set $C$ in $\tilde Y_0$ and any point $x\in \tilde Y_0\smallsetminus C$, let $v(x,C)$ be the unit tangent vector based at $x$ which is tangent to the unique distance-minimizing geodesic from $x$ to $C$. We write $v(x,\xi)$ instead of $v(x,\{\xi\})$ when $\xi$ is a point; in this case we allow $\xi$ to lie in $\tilde Y_0\cup\partial_\infty\tilde Y_0$.

Two families of measures on $\partial_\infty\tilde Y_\Omega$ and $\partial_\infty\tilde Y_0$ are at the heart of Besson, Courtois, and Gallot's approach to entropy rigidity. The more simple is the family of visual measures. For any $y\in \tilde Y_0$, let $\exp: T_y^1\tilde Y_0 \to \partial_\infty \tilde Y_0$ send a unit tangent vector to the endpoint at infinity of the geodesic ray it generates.

\begin{definition}\label{def:visualmeasure}
To each $y\in \tilde Y_0$, we associate the {\em visual measure} $\nu_y$ on $\partial_\infty \tilde Y_0$ by pushing forward under exp the Hausdorff measure $\sigma_y$ induced by the angular metric on $T_y^1 \tilde Y_0$.
\end{definition}

On $\partial_\infty \tilde Y_\Omega$, the \emph{Patterson-Sullivan} measures play a central role. Before introducing them we must define another central object for our arguments.

\begin{definition}\label{def:busemannfunctions}
Given $o\in \tilde Y_-$ and $\alpha \in \partial_\infty \tilde Y_-$, the \emph{Busemann function centered at $\alpha$ and based at $o$} is $B_{o,\alpha}:\tilde Y_-\to \mathbb{R}$ defined by
\[ B_{o,\alpha}(y) = \lim_{t \to \infty} d(y,c(t)) - d(o,c(t))\]
where $c(t)$ is any geodesic ray heading to $\alpha$. We specifically denote Busemann functions in $\tilde Y_\Omega$ by $B^\Omega$ and those in $\tilde Y_0$ by $B^0$. A {\em horosphere} centered at a point at infinity $\alpha$ is a level set for the Busemann function $B^-_{o,\alpha}(\cdot)$, and a {\em horoball} centered at $\alpha$ is the convex interior of a horosphere.

\end{definition}
Note that $B^-_{o,\alpha}(y)$ is 1-Lipschitz in $y$ with gradient $d B^-_{o,\alpha}(y) = - v(y,\alpha)$.

\begin{definition}\label{def:pattersonsullivan}
The \emph{Patterson-Sullivan density} is an assignment $x \mapsto \mu_x$ of a finite measure on $\partial_\infty\tilde Y_\Omega$ to each point in $\tilde Y_\Omega$ satisfying the following two properties:
\begin{itemize}
	\item (quasi-$\Gamma$-invariance) $\mu_{\gamma x} = \gamma_* \mu_x$ for all $\gamma\in \Gamma$ and all $x\in \tilde Y_\Omega$, and
	\item (transformation rule) $\frac{d\mu_x}{d\mu_y}(\beta) = e^{-h B^\Omega_{y,\beta}(x)}$
\end{itemize}
where $h=\delta_\Gamma$ is the critical exponent for the action of $\Gamma$ on $\tilde Y_\Omega$ or, equivalently by \cite[Theorem 1.11]{cramponmarquis14}, the volume growth entropy of $(\tilde Y_\Omega, F_\Omega)$.
\end{definition}

In the setting of strictly convex real projective structures on finite volume manifolds, Crampon constructs the Patterson-Sullivan measures explicitly so that they have full support on the limit set of $\Gamma$ \cite[Section 4.2.1]{cramponthese}, which is equal to the boundary of $\Omega$ in the cofinite case \cite[Corollaire 1.5]{cramponmarquis_finitude}. Crampon provides a proof for surfaces that the critical exponent of the group $\Gamma$ acting on $\Omega$ with the Hilbert metric is strictly larger than the critical exponent of a parabolic group acting on the hyperbolic plane \cite[Lemma 4.3.3]{cramponthese}. The proof is a standard ping-pong argument which extends to higher dimensions by \cite[Corollaire 7.18]{cramponmarquis_finitude}, a generalization of Crampon's \cite[Lemma 1.3.4]{cramponthese}. Thus, in our setting, the critical exponent is strictly larger than the critical exponent of a maximal rank parabolic subgroup acting on $\mathbb H^n$, which is constant equal to $\frac{n-1}2$. As an application, one can extend Crampon's argument in \cite[Proposition 4.3.5]{cramponthese} to show that the Patterson-Sullivan measures have no atoms.

Note that the results of Crampon and Crampon-Marquis assuming both strict convexity of $\Omega$ and that the boundary of $\Omega$ is $C^1$ still apply, because Cooper-Long-Tillman proved that these properties are equivalent when the action is cofinite \cite[Theorem 0.15]{clt}.

By $\mathcal{M}(\partial_\infty \tilde Y_-)$ we denote the set of all finite measures on $\partial_\infty\tilde Y_-$. Throughout, $\Vert \lambda \Vert := \lambda(\partial_\infty \tilde Y_-)$ will denote the total mass of $\mu$.

\begin{definition}\label{def:barycenter}
Fix some basepoint $o\in\tilde Y_0$. For any finite measure $\lambda$ on $\partial\tilde Y_0$ and $y\in\tilde Y_0$, let
\[  \mathcal B(y,\lambda):=\int_{\alpha\in\partial\tilde{X}} B_{o,\alpha}^0(y)d\lambda(\alpha). \]
$B_{o,\alpha}^0$ is convex along geodesic segments and strictly convex along segments which do not have an endpoint at $\alpha$, hence $\mathcal B(y,\lambda)$ has a unique minimum for non-atomic $\lambda$ \cite[Appendix A]{BCG1}. Denote this minimum by $bar(\lambda)$; this is the \emph{barycenter} of $\lambda$.
It is a straightforward exercise to check that the barycenter of $\lambda$ is $\Gamma$-equivariant:
\[
  bar(\gamma_*\lambda)=\gamma\cdot bar(\lambda) \text{ for all } \gamma\in\Gamma.
\]
\end{definition}

%

\section{Geometric lemmas}\label{sec:geometric lemmas}

%

\subsection{Cusps in convex projective manifolds}

The arguments in this paper which go beyond those in \cite{abc} are primarily about the cusps of the finite volume but noncompact Hilbert geometry. As in hyperbolic geometry, a {\em cusp} is a small neighborhood of a boundary component in the manifold, and its holonomy group is called the {\em cusp group}. The boundary component of a cusp is called an {\em end}. In the finite volume case, all cusps are {\em maximal rank cusps}, meaning the boundary component is compact.  A nice simplifying feature in this setting is the following description of the cusps:

\begin{theorem}[{\cite[Theorem 0.4]{clt}, \cite[Theorem 0.5]{clt}, \cite[Theorem 1.7]{cramponmarquis_finitude}}]\label{thm:thickthin}
Every maximal rank cusp in a strictly convex real projective manifold is projectively equivalent to a hyperbolic cusp of the same dimension, meaning the cusp holonomies are conjugate.
\label{thm:equiv_cusps}
\end{theorem}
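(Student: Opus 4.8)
I would read this not as a statement to prove from scratch but as a synthesis of the cited structure theory; here is the argument I would reconstruct. Let $\Theta$ be a maximal rank cusp of $Y_\Omega$ and let $P<\Gamma$ be its holonomy (the \emph{cusp group}): a discrete group of projective transformations preserving $\Omega$, fixing a single point $p\in\partial\Omega$, and containing only parabolic nontrivial elements (spectral radius one, fixing $p$ and no other point of $\partial\Omega$, with no axial translation). Because the cusp is finite volume and of maximal rank, its cross-section is compact, so $P$ acts cocompactly on a horosphere based at $p$; the relative hyperbolicity of the geometry and a Margulis-type lemma force $P$ to be virtually nilpotent. These are the inputs I would start from.

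The plan is to pin down the linear algebra of the parabolics in $P$. Writing the Jordan decomposition $g=g_sg_u$ of each $g\in P$, the semisimple (``rotational'') parts $g_s$ have all eigenvalues of modulus one and, by the crystallographic restriction coming from discreteness and cocompactness of the action on the cross-section, generate a finite group. Passing to a finite-index subgroup $P_0<P$ I may therefore assume every element is unipotent, so $P_0$ is a cocompact group of unipotent projective transformations. The decisive step is to determine the Jordan block structure of a nontrivial $u\in P_0$: I claim strict convexity forces $u$ to consist of a single Jordan block of size three together with $(n-2)$ trivial blocks, which is exactly the Jordan type of a parabolic in $\mathrm{PO}(n,1)$ fixing an ideal point of $\mathbb H^n$.

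This claim is the heart of the matter and the main obstacle. For a general properly convex $\Omega$ one can build cusp groups whose unipotents have larger, or several nontrivial, Jordan blocks, yielding non-Euclidean cusp cross-sections; strict convexity must be what excludes these. The geometric input I would use is the equivalence, for cofinite actions, of strict convexity with $C^1$ regularity of $\partial\Omega$ (\cite[Theorem 0.15]{clt}). Tracking the orbit of an interior point under the one-parameter unipotent subgroup through $u$, the leading Jordan block controls how the orbit accumulates on $\partial\Omega$: a block of size greater than three, or two nontrivial blocks, forces either a nondegenerate projective line segment in $\partial\Omega$ (violating strict convexity) or a non-unique supporting hyperplane at $p$ (violating the $C^1$ condition). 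Only the size-three block survives, and the complementary action is trivial.

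Finally I would assemble the conclusion. Once every nontrivial element of $P_0$ is a hyperbolic-type unipotent and the group is abelian, the Lie algebra they span is conjugate into the nilpotent algebra $\mathfrak n\cong\mathbb R^{n-1}$ of the minimal parabolic of $\mathrm{PO}(n,1)$, and cocompactness upgrades $P_0$ to a lattice in a conjugate of the corresponding unipotent group $N$. Reinstating the finite semisimple parts, which lie in the compact $\mathrm{SO}(n-1)$ normalizing $N$, places all of $P$ in a single conjugate of the parabolic subgroup $MN<\mathrm{PO}(n,1)$. The conjugating projective transformation carries $\Omega$ near $p$ onto the Beltrami--Klein ellipsoid near the corresponding ideal point, exhibiting the projective equivalence of $\Theta$ with a hyperbolic cusp of the same dimension and the conjugacy of the cusp holonomies.
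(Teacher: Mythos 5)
The first thing to note is that the paper does not prove this statement at all: it is imported wholesale from Cooper--Long--Tillmann and Crampon--Marquis, and the paper's only gesture toward a proof is to record the more explicit result it actually uses, Theorem \ref{thm:ellipsoids} (the invariant osculating ellipsoids of Crampon--Marquis), from which the present theorem follows immediately --- once the cusp group preserves an ellipsoid tangent to $\partial\Omega$ at the parabolic point, it is conjugate into the stabilizer of an ideal point in $\mathrm{PO}(n,1)$. So your proposal must be measured against the cited literature, not against anything in this paper. Against that standard, your outline reproduces the right general strategy (algebraic analysis of the parabolic holonomy, reduction to unipotent linear algebra, strict convexity plus $C^1$ boundary excluding bad Jordan types), but it has genuine gaps at exactly the steps that carry the weight.

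Three concrete problems. First, your reduction to a finite-index unipotent subgroup $P_0$ invokes a ``crystallographic restriction,'' i.e.\ Bieberbach; but Bieberbach applies to discrete cocompact groups of \emph{Euclidean} isometries, and the horospherical cross-section carries no Euclidean structure until after the conjugacy into $\mathrm{PO}(n,1)$ has been established --- as written, this is circular. Virtual nilpotency (which the Margulis-type lemma of Crampon--Marquis does supply) is not enough: a discrete nilpotent linear group need not have any finite-index unipotent subgroup, since semisimple parts of infinite order can persist in every nontrivial element. Second, you flag the Jordan-type claim as the heart of the matter but only gesture at its proof; that is honest, but it is precisely the content of the cited results, so the proposal does not discharge it. Third, and most seriously, the final assembly does not follow: knowing that each element of $P_0$ individually has Jordan type $(3,1,\dots,1)$, and even that $P_0$ is abelian --- which you assert without justification; abelianness (indeed virtual $\mathbb{Z}^{n-1}$-ness) is a \emph{consequence} of the theorem, not an input --- does not imply that the logarithms span a subalgebra conjugate into $\mathfrak{n}\subset\mathfrak{so}(n,1)$. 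Simultaneous conjugacy of a whole commuting family is much stronger than element-wise conjugacy, and this is where the literature does real work: Crampon--Marquis pass to the Zariski closure of the cusp group, show it contains a full $(n-1)$-dimensional unipotent group, and build the invariant ellipsoids from its orbits, while Cooper--Long--Tillmann proceed via their classification of cusp Lie groups. Your proposal is thus a reasonable roadmap of the known proof, but the three steps above are exactly where it would have to be filled in.
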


In particular, the end of a maximal rank cusp is a single point, and Theorem \ref{thm:equiv_cusps} implies the stabilizing cusp group is virtually $\mathbb Z^{n-1}$, where $n$ is the dimension of the manifold.

On the level of the universal cover, each lift of an end is a {\em bounded parabolic point}, which is a point in the boundary of the universal cover whose stabilizer contains only parabolic isometries, and preserves and acts cocompactly on each horosphere centered at the fixed bounded parabolic point \cite[Proposition 5.6]{clt}, \cite[Th\'eor\`eme 3.3]{cramponmarquis_finitude}. Parabolic isometries are defined as elements of the group for which the infimum of the displacement of a point with respect to the Hilbert metric is equal to zero and not realized. Though the notion of parabolic point is more general than the notion of bounded parabolic point, there is no need for this distinction in our setting, hence we will use the term {\em parabolic point} to refer throughout to the lift of an end in the quotient to the universal cover.  A stabilizer of a parabolic point is called a {\em parabolic group}.

A more explicit result implying Theorem \ref{thm:equiv_cusps} is the following:

\begin{theorem}[{\cite[Th\'eor\`eme 7.14]{cramponmarquis_finitude}}]
Let $\Omega$ be a properly, strictly convex domain in $\mathbb RP^n$ with $C^1$ boundary, and let $P$ be a maximal rank parabolic subgroup of $\PSL(n+1,\mathbb R)$ preserving $\Omega$ which fixes the boundary point $p$. Then there exist $P$-invariant {\em osculating ellipsoids} to $\Omega$ at $p$, denoted $\mathcal E_{in}$ and $\mathcal E_{out}$, meaning 
  \begin{itemize}
    \item $\mathcal E_{in}\subset\Omega\subset \mathcal E_{out}$,
    \item $\partial\mathcal E_{in}\cap\partial\Omega=\partial \mathcal E_{out}\cap\partial\Omega = \{p\}$,
    \item $\mathcal E_{in}$ is a horoball in $\mathcal E_{out}$ endowed with the Hilbert metric.
  \end{itemize}
  \label{thm:ellipsoids}
\end{theorem}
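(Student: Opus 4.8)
The plan is to reduce the whole statement to an elementary picture in a single affine chart, in which the ellipsoids become paraboloids and the only genuine work is the production of a $P$-invariant positive-definite quadratic form. Throughout I pass to a finite-index subgroup so that $P$ is unipotent; the forms and ellipsoids produced are canonical, so the resulting objects are invariant under the full group.

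First I would record that since $\partial\Omega$ is $C^1$ and strictly convex there is a \emph{unique} supporting hyperplane $H$ to $\Omega$ at $p$, and since $P$ fixes $p$ and preserves $\Omega$ it must preserve $H$. Passing to the affine chart $\mathbb A = \mathbb{RP}^n\smallsetminus H$, the group $P$ acts by affine transformations, and by strict convexity $\overline\Omega\cap H=\{p\}$, so in coordinates $\mathbb A\cong\mathbb R^{n-1}\times\mathbb R$ the domain appears as an unbounded convex region $\{t>\phi(x)\}$ whose only recession direction is the axis corresponding to $p$. In this chart an ellipsoid tangent to $H$ at $p$ is exactly an elliptic paraboloid $\{t>q(x)+c\}$ with $q(x)=\langle Qx,x\rangle$ positive definite and $c\in\mathbb R$, and a short computation (using that the orthogonal group of a definite form contains no nontrivial unipotents) shows the unipotent affine maps preserving such a paraboloid form the abelian group $N_q=\{(x,t)\mapsto(x+a,\,t+2\langle Qa,x\rangle+q(a)):a\in\mathbb R^{n-1}\}\cong\mathbb R^{n-1}$, which moreover preserves every parallel paraboloid $\{t-q(x)=c\}$ simultaneously.

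The crux is to show that $P$ preserves such a family, i.e.\ that $P$ is a lattice inside some $N_q$, equivalently that $P$ preserves an ellipsoid having $p$ on its boundary. This is the genuine obstacle and is where strict convexity and the $C^1$ hypothesis are essential: one must analyze the Jordan structure of the maximal-rank unipotent group preserving a properly convex body and show it is forced to be the principal (hyperbolic) one, which pins down the definite form $q$. If one is willing to invoke the cusp-equivalence \ref{thm:equiv_cusps}, this step is immediate, since $P$ is then conjugate to a hyperbolic cusp group, whose unipotent part lies in the unipotent radical $N_{q_0}$ of a copy of $\PSL(2,\dots)$-type symmetry of an ellipsoid; conjugating back gives $P\subseteq N_q$. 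I would present the direct structural route as the main line and note this shortcut.

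Granting $P\subseteq N_q$, the conclusion is elementary. Invariance of $\partial\Omega=\{t=\phi(x)\}$ under a generator of $N_q$ yields the functional equation $\phi(x+a)=\phi(x)+2\langle Qa,x\rangle+q(a)$, so $\psi:=\phi-q$ is invariant under the translation lattice $\Lambda=\{a\}$. Hence $\psi$ is $\Lambda$-periodic and continuous, thus bounded with maximum $M$ and minimum $m$ (this boundedness is exactly the cited cocompactness of $P$ on horospheres). Choosing $c_{out}<m$ and $c_{in}>M$ and setting $\mathcal E_{out}=\{t>q(x)+c_{out}\}$ and $\mathcal E_{in}=\{t>q(x)+c_{in}\}$ produces $P$-invariant ellipsoids with $\mathcal E_{in}\subset\Omega\subset\mathcal E_{out}$; their boundaries meet $\partial\Omega$ only at the asymptotic point $p$, since the finite equations $\psi=c_{out}$ and $\psi=c_{in}$ have no solutions by the strict choice of constants, while both boundaries pass through $p$ with matching second-order term $q$. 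Finally, $\mathcal E_{out}$ with its Hilbert metric is isometric to $\mathbb H^n$ and the sub-paraboloids $\{t>q(x)+c\}$ are precisely its horoballs centered at $p$, so $\mathcal E_{in}$ is a horoball in $\mathcal E_{out}$, completing the proof.
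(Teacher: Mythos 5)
First, a note on the comparison: the paper does not prove this statement at all --- it is quoted verbatim from Crampon--Marquis (Th\'eor\`eme 7.14 of \cite{cramponmarquis_finitude}) and used as a black box. So your attempt can only be judged on its own terms and against the cited source.

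Your reduction is the right one, and it matches how the actual proof in the literature is organized: the unique supporting hyperplane $H$ at $p$ is $P$-invariant, in the affine chart $\mathbb{RP}^n\smallsetminus H$ ellipsoids tangent to $H$ at $p$ become elliptic paraboloids, the unipotent affine stabilizer of such a paraboloid is the group $N_q$, and once one knows $P\subseteq N_q$ the theorem follows from periodicity and boundedness of $\psi=\phi-q$ exactly as you say. But there is a genuine gap at precisely the step you yourself label the crux: you never produce the positive-definite form $q$, i.e.\ you never show that a maximal-rank parabolic group preserving a properly convex, strictly convex domain with $C^1$ boundary must lie in some $N_q$. Writing ``one must analyze the Jordan structure \dots and show it is forced to be the principal one'' is a description of the missing argument, not the argument; that eigenvalue/Jordan-block analysis of parabolic isometries of strictly convex $C^1$ domains \emph{is} the content of Crampon--Marquis's theorem, and everything you do prove is the routine part that comes after it. Your opening normalization --- ``pass to a finite-index subgroup so that $P$ is unipotent'' --- already leans on this unproven structure theory: a priori a parabolic isometry of a strictly convex domain is only quasi-unipotent, and an irrational elliptic part does not die in any finite-index subgroup, so ruling it out is part of the same missing analysis.

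The fallback you offer, invoking Theorem \ref{thm:equiv_cusps}, is circular as this paper organizes the material: Theorem \ref{thm:ellipsoids} is introduced there as ``a more explicit result implying Theorem \ref{thm:equiv_cusps},'' and in Crampon--Marquis the cusp-equivalence statement is deduced \emph{from} the osculating-ellipsoid theorem, not the other way around. The shortcut becomes logically sound only if you appeal specifically to Cooper--Long--Tillmann's independent proof of cusp equivalence in \cite{clt}; in that case your bookkeeping (the unique fixed point of $P$ is $p$, the unique $P$-invariant hyperplane through $p$ is the common tangent, hence the conjugated ellipsoid is tangent to $H$ at $p$) can be completed, but then the ``proof'' consists of citing a result at least as strong as the one to be proved --- which has the same status as the paper's own citation, rather than being a proof.
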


With this theorem we easily establish the following fact, which will be
employed later:

\begin{lemma}\label{lem:short loops in cusps}
For any $\epsilon_0>0$, any parabolic point $\tilde \Theta$ in $\partial_\infty \tilde Y_\Omega$ with stabilizer $\Gamma_{\tilde\Theta}$, and any finite set of elements $p_1,\ldots,p_{n-1}\in \Gamma_{\tilde\Theta}$, there exists an open horoball $U$ centered at $\tilde\Theta$ such that if $x\in U$ then $d_\Omega(x,p_ix)<\epsilon_0$ for all $i=1,\ldots,n-1$. 
\end{lemma}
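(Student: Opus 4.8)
The plan is to reduce the displacement estimate to a standard fact about parabolic isometries of hyperbolic space, using the osculating ellipsoid of Theorem~\ref{thm:ellipsoids} as the bridge, and then to promote the resulting control to a genuine Hilbert horoball using the cocompact action of the cusp group on horospheres.

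First I would produce an ellipsoid adapted to the cusp. Since $\tilde\Theta$ is a bounded parabolic point, its stabilizer $\Gamma_{\tilde\Theta}$ is a maximal rank parabolic group (virtually $\mathbb{Z}^{n-1}$) fixing $\tilde\Theta=p$, so it is contained in a maximal rank parabolic subgroup $P\le\PSL(n+1,\mathbb{R})$ preserving $\Omega$; in particular $p_1,\dots,p_{n-1}\in P$. As $\partial\Omega$ is $C^1$ in our setting, Theorem~\ref{thm:ellipsoids} yields a $P$-invariant---hence $p_i$-invariant---ellipsoid $\mathcal{E}_{in}\subseteq\Omega$ meeting $\partial\Omega$ only at $\tilde\Theta$ and realized as a horoball of $\mathcal{E}_{out}\cong\mathbb{H}^n$. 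The Hilbert metric of an ellipsoid is the Beltrami--Klein model, so $(\mathcal{E}_{in},d_{\mathcal{E}_{in}})$ is isometric to $\mathbb{H}^n$, and by Theorem~\ref{thm:equiv_cusps} the $p_i$ act as parabolic isometries fixing $\tilde\Theta$. The key classical input is then that in $\mathbb{H}^n$ the displacement $d(x,\Phi x)$ of a parabolic $\Phi$ fixing a point at infinity decreases monotonically to $0$ as $x$ runs to that point along a geodesic ray (in the upper half-space model with the fixed point at infinity, $\Phi$ acts as a Euclidean isometry on each horosphere $\{y=t\}$ and the displacement is of order $1/t$). Combined with the monotonicity $d_\Omega\le d_{\mathcal{E}_{in}}$ on $\mathcal{E}_{in}$, this controls $d_\Omega(x,p_ix)$ for $x$ deep in $\mathcal{E}_{in}$.

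The main work is to convert this into control on a full $\Omega$-horoball. I would fix an $\Omega$-horosphere $S_0$ centered at $\tilde\Theta$ and a compact fundamental domain $K_0\subseteq S_0$ for the (cocompact) action of $\Gamma_{\tilde\Theta}$; passing to a finite-index abelian subgroup $\Gamma'$ commuting with the $p_i$ keeps the domain compact and makes $x\mapsto\max_i d_\Omega(x,p_ix)$ exactly $\Gamma'$-invariant. Pushing $K_0$ toward $\tilde\Theta$ by the $\Omega$-radial geodesic flow gives fundamental domains $K_t\subseteq S_t$ on the deeper horospheres $S_t$. Since the projective ray to $\tilde\Theta$ is a geodesic in each of these Hilbert metrics, each such ray is also an $\mathcal{E}_{out}$-geodesic to $\tilde\Theta$ and hence eventually enters the $\mathcal{E}_{out}$-horoball $\mathcal{E}_{in}$; by compactness of $K_0$ this entry time is uniform. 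Once inside, the $\mathcal{E}_{in}$-displacement along the ray is continuous in the starting point and decreases to $0$, so by Dini's theorem $\sup_{x\in K_t}\max_i d_{\mathcal{E}_{in}}(x,p_ix)\to 0$, whence $\sup_{x\in K_t}\max_i d_\Omega(x,p_ix)\to 0$. By $\Gamma'$-invariance the same bound holds on all of $S_t$, so there is $T$ with $\max_i d_\Omega(x,p_ix)<\epsilon_0$ for every $x$ on every horosphere at depth $\ge T$; taking $U$ to be the open $\Omega$-horoball $\{B^\Omega_{o,\tilde\Theta}<-T\}$ completes the proof. I expect this last paragraph---promoting pointwise control along radial geodesics to uniform control on a genuine Hilbert horoball, for which the cocompactness of the cusp action and the monotonicity of hyperbolic displacement are essential---to be the main obstacle; the ellipsoid reduction itself is immediate from Theorem~\ref{thm:ellipsoids}.
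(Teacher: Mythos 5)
Your proposal follows the same skeleton as the paper's proof: the osculating ellipsoid $\mathcal E_{in}\subset\Omega$ from Theorem~\ref{thm:ellipsoids}, the decay of parabolic displacement in the hyperbolic metric of the ellipsoid, the comparison $d_\Omega\le d_{\mathcal E_{in}}$ on $\mathcal E_{in}$, and cocompactness of $\Gamma_{\tilde\Theta}$ on horospheres to pass to a genuine $\Omega$-horoball. The gap is in your uniformization step. You need $x\mapsto\max_i d_\Omega(x,p_ix)$ to be invariant under a subgroup that still acts cocompactly on horospheres, and for this you assert one can pass ``to a finite-index abelian subgroup $\Gamma'$ commuting with the $p_i$.'' No such subgroup need exist: commuting with $p_i$ forces $\Gamma'$ to lie in the centralizer of $p_i$, which for a Bieberbach group with nontrivial holonomy can have infinite index. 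Concretely, in the Klein bottle group $\langle a,b \mid bab^{-1}=a^{-1}\rangle$ (the cusp group of, e.g., the Gieseking manifold) the centralizer of $b$ is $\langle b\rangle$, of infinite index, and a glide reflection acting alone on a horosphere $\mathbb R^{n-1}$ is not cocompact. Nor can you repair this by enlarging the finite set of elements: $d_\Omega(\gamma x,p_i\gamma x)=d_\Omega(x,\gamma^{-1}p_i\gamma x)$, and the conjugates $\gamma^{-1}p_i\gamma$ (here $a^{-m}ba^{m}=a^{-2m}b$) form an infinite family. So the invariance on which your radial-flow/Dini argument rests genuinely fails for such $p_i$.

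The paper's proof avoids invariance of displacement functions altogether: it asserts a \emph{uniform} bound --- for each $\epsilon$ there is a hyperbolic horoball $H^{\mathcal E}$ of the ellipsoid on which all $d_{\mathcal E}(x,p_ix)<\epsilon$, not merely pointwise decay along rays --- and then only needs to nest some $\Omega$-horoball inside $H^{\mathcal E}$, which follows from cocompactness of $\Gamma_{\tilde\Theta}$ on both families of horospheres and makes no reference to the $p_i$ at all. You should be aware, however, that your gap sits exactly on top of a subtlety in that uniform claim: it is true only for parabolic isometries without rotational part. A glide reflection $z\mapsto\bar z+1$ in $\mathbb H^3$, or a screw parabolic in $\mathbb H^n$ for $n\ge4$, has unbounded displacement on \emph{every} horoball, so for such $p_i$ the statement of the lemma itself fails (hyperbolic space being a special case of the setting), and no proof could succeed. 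Read with the $p_i$ restricted to translation-type elements --- which is all that the application in Lemma~\ref{lem:two thirds} requires --- both arguments close up, and your missing subgroup then exists trivially: take $\Gamma'$ to be the translation lattice, which is abelian, of finite index, and contains the $p_i$. So your route is salvageable with that restriction made explicit, but as written the commuting-subgroup claim is false, and it is the crux of your argument.
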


\begin{proof}
Let $\Omega$ be a properly convex domain in $\mathbb RP^n$, and represent $\Gamma$ as a discrete group of projective transformations acting cofinitely on $\Omega$ such that the quotient $\Omega/\Gamma$ is isometric to $Y_\Omega$ when endowed with the Hilbert metric (and hence $\Omega$ is isometric to $\tilde Y_\Omega$).  By Theorem \ref{thm:ellipsoids}, there is a $\Gamma_{\tilde\Theta}$-invariant ellipsoid $\mathcal E$ contained in $\Omega$, and tangent to $\Omega$ at $\tilde \Theta$. Since $\mathcal E$ with the Hilbert metric is isometric to hyperbolic $n$-space, for all $\epsilon>0$ there is a horoball $H^{\mathcal E}$ centered at $\tilde \Theta$ in the metric space $(\mathcal E,d_{\mathcal E})$ such that for all $x\in H^{\mathcal E}$ and $i=1,\ldots n-1$, we have $d_{\mathcal E_1}(x,p_ix)<\epsilon$.

Since $\Gamma_{\tilde\Theta}$ preserves both $\Omega$ and $\mathcal E$ as a parabolic group, $\Gamma_{\tilde\Theta}$ preserves and acts cocompactly on horospheres for the metric spaces $(\Omega,d_\Omega)$ and $(\mathcal E,d_{\mathcal E})$. Thus since $\mathcal E$ is a subset of $\Omega$, there exists a horoball $H^\Omega$ for the metric space $(\Omega,d_\Omega)$ which is contained in $H^{\mathcal E}$. 

Lastly, it is a straightforward observation using the cross-ratio that if $\mathcal E\subset \Omega$, then $d_{\mathcal E}\geq d_{\Omega}$ when restricted to $\mathcal E$. Thus for all $x\in H^\Omega\subset H^{\mathcal E}$ and all $i=1,\ldots,n-1$, 
  \[
    d_\Omega(x,p_i x)\leq d_{\mathcal E}(x,px) <\epsilon.
  \]
\end{proof}

%

\subsection{Barycenters of visual and Patterson-Sullivan measures}

In this section we first prove some general lemmas about the barycenter map and the families of visual and Patterson-Sullivan measures. Then we prove some specific results on the behavior of these objects for points in the cusp of one of our manifolds.

Below we will denote a closed half-space in the hyperbolic space $\tilde Y_0$ by $H$.  The hyperplane boundary of $H$ in $\tilde Y_0$ is denoted $\partial H$. The boundary at infinity of $H$ is denoted by $\partial_\infty H$.

\begin{lemma}\label{lem:bar visual}
For any $y\in \tilde Y_0$, $bar(\nu_y) = y$.
\end{lemma}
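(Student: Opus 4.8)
The plan is to use the defining symmetry of the visual measure $\nu_y$, which makes it rotationally invariant about $y$, to show that the barycenter functional $\mathcal{B}(\cdot,\nu_y)$ has a critical point at $y$, and then appeal to the strict convexity established in Definition \ref{def:barycenter} to conclude this critical point is the unique minimum. Concretely, I would compute the gradient of $\mathcal{B}(z,\nu_y)$ in the variable $z$ and evaluate it at $z=y$.

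First I would differentiate under the integral sign. Recall from the remark following Definition \ref{def:busemannfunctions} that $d B^0_{o,\alpha}(z) = -v(z,\alpha)$, the negative of the unit vector at $z$ pointing toward $\alpha$. Hence
\[
  d_z \mathcal{B}(z,\nu_y) = -\int_{\alpha \in \partial_\infty \tilde Y_0} v(z,\alpha)\, d\nu_y(\alpha),
\]
viewed as a cotangent vector at $z$ (identified with a tangent vector via the hyperbolic metric $g_0$). Evaluating at $z=y$, the vector $v(y,\alpha)$ is precisely the unit tangent vector at $y$ whose geodesic ray has endpoint $\alpha$, so it is the inverse image of $\alpha$ under the map $\exp\colon T^1_y\tilde Y_0 \to \partial_\infty \tilde Y_0$ from Definition \ref{def:visualmeasure}. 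Since $\nu_y$ is by definition the pushforward under $\exp$ of the rotationally symmetric angular Hausdorff measure $\sigma_y$ on the unit sphere $T^1_y \tilde Y_0$, the integral becomes
\[
  d_z \mathcal{B}(z,\nu_y)\big|_{z=y} = -\int_{v \in T^1_y \tilde Y_0} v\, d\sigma_y(v) = 0,
\]
where the last equality holds because $\sigma_y$ is invariant under the full orthogonal group $O(T_y \tilde Y_0)$ (it is the standard round measure on the sphere), so its vector-valued integral — its center of mass — is the origin of $T_y\tilde Y_0$ by symmetry: for every $v$ in the support, $-v$ lies in the support with equal infinitesimal mass.

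Thus $y$ is a critical point of $z \mapsto \mathcal{B}(z,\nu_y)$. Finally I would invoke strict convexity: since $\nu_y$ is non-atomic (it is the smooth visual measure, a point already used in Definition \ref{def:barycenter} to guarantee existence and uniqueness of the minimum), the functional $\mathcal{B}(\cdot,\nu_y)$ is strictly convex along geodesics and attains a unique minimum at its unique critical point. Therefore $bar(\nu_y)=y$. The only genuinely delicate point is justifying differentiation under the integral and the identification of the first-variation formula with the spherical center of mass; both are routine given the $1$-Lipschitz, $C^1$ dependence of $B^0_{o,\alpha}$ on the base point in the hyperbolic setting, so I do not anticipate a substantive obstacle here.
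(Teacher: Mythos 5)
Your proof is correct and follows exactly the paper's approach: the paper likewise identifies $bar(\nu_y)$ as the unique critical point of $\mathcal{B}(\cdot,\nu_y)$ and notes that ``a simple calculation'' using $dB^0_{o,\alpha}(y')=-v(y',\alpha)$ shows the gradient vanishes at $y'=y$; your write-up simply makes that calculation explicit via the pushforward structure of $\nu_y$ and the symmetry of $\sigma_y$ on $T^1_y\tilde Y_0$.
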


\begin{proof}
  Fix a basepoint $o\in \tilde Y_0$. By Definition \ref{def:barycenter}, $bar(\nu_y)$ occurs at the unique point $y' \in Y$ where $d\mathcal{B}(y',\nu_y)=0$. A simple calculation using the fact that $dB^0_{o,\alpha}(y')=-v(y',\alpha)$ proves that this happens when $y'=y.$
\end{proof}

The following Lemma is similar in spirit to \cite[Lemma 3.2]{BCS}.

\begin{lemma}\label{lem:control_bar}
There is a uniform constant $D>0$ such that the following holds. If $\lambda \in \mathcal{M}(\partial_\infty \tilde Y_0)$, $bar(\lambda)$ is defined, and $H$ is a closed halfspace in $\tilde Y_0$ such that $\lambda(\partial_\infty H) >\frac{2}{3}\Vert \lambda \Vert$, then $d_{g_0}(bar(\lambda),H)\leq D$.
\end{lemma}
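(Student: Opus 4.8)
The plan is to locate the barycenter via its first-order (vanishing-gradient) characterization and then convert the mass hypothesis into a distance bound through an estimate for the visual angle that a half-space subtends from a faraway point. Write $y_0 = bar(\lambda)$. If $y_0 \in H$ there is nothing to prove, so assume $t := d_{g_0}(y_0, H) > 0$, let $q \in \partial H$ be the foot of the perpendicular from $y_0$, and set $u := v(y_0, q) \in T^1_{y_0}\tilde Y_0$, the unit vector pointing from $y_0$ toward $H$. Since $\mathcal B(\cdot,\lambda)$ is strictly convex with a unique interior minimum at $y_0$ and $dB^0_{o,\alpha}(y) = -v(y,\alpha)$, the minimizer satisfies the vector identity
\[ \int_{\partial_\infty \tilde Y_0} v(y_0,\alpha)\, d\lambda(\alpha) = 0 \in T_{y_0}\tilde Y_0. \]
I would pair this with $u$ to get $\int \langle v(y_0,\alpha), u\rangle\, d\lambda(\alpha) = 0$, and then estimate the integrand separately on $\partial_\infty H$ and on its complement.

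The key step, which I expect to be the main obstacle, is the geometric claim that for every $\alpha \in \partial_\infty H$ one has $\langle v(y_0,\alpha), u\rangle \geq \tanh(t)$. To prove it I would use homogeneity of $\tilde Y_0 = \mathbb H^n$ to move $y_0$ to the center of the ball model, where geodesics from $y_0$ are Euclidean rays and $\langle v(y_0,\alpha), u\rangle$ is the cosine of the Euclidean angle between $\alpha$ and $u$. Normalizing so that $u$ is the first coordinate direction, $\partial H$ becomes the totally geodesic hyperplane orthogonal to the $u$-axis meeting it at Euclidean distance $\rho = \tanh(t/2)$; a short computation with the orthogonality relation for geodesic hyperspheres shows that $\partial_\infty H$ is exactly the round cap $\{\alpha : \angle(\alpha,u)\le\theta\}$ with $\cos\theta = \tfrac{2\rho}{1+\rho^2} = \tanh(t)$. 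Hence every $\alpha\in\partial_\infty H$ satisfies $\langle v(y_0,\alpha),u\rangle = \cos\angle(\alpha,u)\ge \tanh t$. By rotational symmetry about the $u$-axis this argument is dimension-independent.

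Finally I would combine the estimates. The integrand is $\ge \tanh t$ on $\partial_\infty H$ and $\ge -1$ everywhere, so the vanishing-integral identity yields
\[ 0 \ge \tanh(t)\,\lambda(\partial_\infty H) - \lambda\bigl(\partial_\infty\tilde Y_0\setminus\partial_\infty H\bigr). \]
Using $\lambda(\partial_\infty H) > \tfrac23\Vert\lambda\Vert$, hence $\lambda(\partial_\infty\tilde Y_0\setminus\partial_\infty H) < \tfrac13\Vert\lambda\Vert$ (with $\Vert\lambda\Vert>0$ since the barycenter is defined), this forces $\tanh(t)\cdot\tfrac23 < \tfrac13$, i.e.\ $\tanh t < \tfrac12$, so $t < \tfrac12\log 3$. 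Thus the lemma holds with the explicit universal constant $D = \tfrac12\log 3 = \operatorname{artanh}(\tfrac12)$, independent of dimension. The differentiation under the integral sign and the strict convexity of $\mathcal B$ used above are standard and can be cited from \cite{BCG1}; the visual-angle computation is the only substantive work.
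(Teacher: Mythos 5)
Your proof is correct and takes essentially the same approach as the paper: both pair the derivative of $\mathcal{B}(\cdot,\lambda)$ with the unit vector $v(y,H)$ toward $H$, split the integral over $\partial_\infty H$ and its complement, and play the $\tfrac{2}{3}$ mass hypothesis against a lower bound on the inner product for directions in $\partial_\infty H$ (your threshold $\tanh t < \tfrac{1}{2}$ is exactly the paper's asserted bound $\langle v(y,H),v(y,\alpha)\rangle > \tfrac{1}{2}$ beyond distance $D$). The only differences are presentational: the paper argues contrapositively, showing any point at distance greater than $D$ from $H$ has nonvanishing gradient and so cannot be the barycenter, and it merely asserts the angle estimate, whereas you evaluate the vanishing gradient at the barycenter itself and carry out the ball-model computation to get the explicit constant $D = \tfrac{1}{2}\log 3$.
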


\begin{proof}
First, for the hyperbolic space $\tilde Y_0$, it is clear that there exists a constant $D$ such that for any $\alpha\in \partial_\infty H$, if $d_{g_0}(y,H)>D$ then $\langle v(y,H),v(y,\alpha) \rangle>\frac{1}{2}$.  (Here $\langle v_1, v_2\rangle = g_0(v_1,v_2)$.) Suppose $\lambda(\partial_\infty H^+)>\frac{2}{3}\Vert \lambda\Vert$ and let $y$ be any point with $d_{g_0}(y,H)>D$. Then
\begin{multline*}
	\langle v(y,H), -d\mathcal{B}(y,\lambda) \rangle = \int_{\alpha\in \partial \tilde Y_0} \left\langle v(y,H), v(y,\alpha) \right\rangle d\lambda(\alpha)  \\ 
		=  \int_{\alpha\in \partial_\infty H} \left\langle v(y,H), v(y,\alpha) \right\rangle d\lambda(\alpha) + \int_{\alpha\notin \partial_\infty H} \left\langle v(y,H), v(y,\alpha) \right\rangle d\lambda(\alpha) \\ 
		> \frac{1}{2} \lambda(\partial_\infty H) - (\Vert\lambda\Vert-\lambda(\partial_\infty H))   >0,
\end{multline*}
since $\lambda(\partial_\infty H)>\frac{2}{3}\Vert \lambda\Vert$. Since $d\mathcal{B}(y,\lambda) \neq 0$, $y$ cannot be $bar(\lambda)$.
\end{proof}

\begin{lemma}\label{lem:PS rule}
For all measurable $A\subset \partial_\infty \tilde Y_\Omega$,
\[ e^{-h d_\Omega(x,\gamma x)} \mu_x(A) \leq \mu_x(\gamma A) \leq e^{h d_\Omega(x,\gamma x)} \mu_x(A). \]
\end{lemma}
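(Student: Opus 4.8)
The plan is to derive both inequalities directly from the two defining properties of the Patterson--Sullivan density in Definition \ref{def:pattersonsullivan}, combined with the $1$-Lipschitz bound on Busemann functions recorded after Definition \ref{def:busemannfunctions}. The only real content is careful bookkeeping of which point serves as the base and which as the variable in the Busemann function, and keeping track of $\gamma$ versus $\gamma^{-1}$; there is no substantive obstacle beyond this.

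First I would use quasi-$\Gamma$-invariance to transfer the $\gamma$ from the set $A$ onto the basepoint. Since $\mu_{\gamma x}=\gamma_*\mu_x$ means $\mu_{\gamma x}(A)=\mu_x(\gamma^{-1}A)$ for every measurable $A$, replacing $\gamma$ by $\gamma^{-1}$ gives the identity $\mu_x(\gamma A)=\mu_{\gamma^{-1}x}(A)$. This reduces the problem to comparing the two measures $\mu_{\gamma^{-1}x}$ and $\mu_x$ on the \emph{same} set $A$.

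Next I would invoke the transformation rule as a Radon--Nikodym derivative between these two measures, with $y=x$ playing the role of the reference point:
\[
  \mu_{\gamma^{-1}x}(A)=\int_A \frac{d\mu_{\gamma^{-1}x}}{d\mu_x}(\beta)\,d\mu_x(\beta)
  =\int_A e^{-h\,B^\Omega_{x,\beta}(\gamma^{-1}x)}\,d\mu_x(\beta).
\]
It then remains to bound the exponent uniformly over $\beta$. Because the Busemann function $B^\Omega_{x,\beta}$ is based at $x$ we have $B^\Omega_{x,\beta}(x)=0$, and since it is $1$-Lipschitz in its spatial argument,
\[
  \bigl|B^\Omega_{x,\beta}(\gamma^{-1}x)\bigr|
  =\bigl|B^\Omega_{x,\beta}(\gamma^{-1}x)-B^\Omega_{x,\beta}(x)\bigr|
  \leq d_\Omega(\gamma^{-1}x,x)=d_\Omega(x,\gamma x),
\]
where the final equality uses that $\gamma$ acts as an isometry of $(\tilde Y_\Omega,d_\Omega)$.

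Finally I would combine these. The pointwise bound forces $e^{-h\,d_\Omega(x,\gamma x)}\leq e^{-h\,B^\Omega_{x,\beta}(\gamma^{-1}x)}\leq e^{h\,d_\Omega(x,\gamma x)}$ for $\mu_x$-almost every $\beta$, so integrating over $A$ yields
\[
  e^{-h\,d_\Omega(x,\gamma x)}\mu_x(A)\leq \mu_{\gamma^{-1}x}(A)\leq e^{h\,d_\Omega(x,\gamma x)}\mu_x(A).
\]
Substituting the identity $\mu_{\gamma^{-1}x}(A)=\mu_x(\gamma A)$ from the first step gives exactly the claimed double inequality.
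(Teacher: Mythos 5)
Your proposal is correct and follows essentially the same route as the paper's proof: bound the Radon--Nikodym derivative $\frac{d\mu_{\gamma^{-1}x}}{d\mu_x}$ between $e^{\pm h d_\Omega(x,\gamma x)}$ via the transformation rule and the $1$-Lipschitz property of Busemann functions, then convert $\mu_{\gamma^{-1}x}(A)$ into $\mu_x(\gamma A)$ by quasi-$\Gamma$-invariance. The paper states this in two lines; your write-up simply makes explicit the bookkeeping (the identity $\mu_x(\gamma A)=\mu_{\gamma^{-1}x}(A)$, the vanishing $B^\Omega_{x,\beta}(x)=0$, and the isometry step $d_\Omega(\gamma^{-1}x,x)=d_\Omega(x,\gamma x)$) that the paper leaves to the reader.
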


\begin{proof}
The transformation rule of the Patterson-Sullivan family and the 1-Lipschitz property of Busemann
functions (Definitions \ref{def:pattersonsullivan} and \ref{def:busemannfunctions}) together imply 
\[ 
	e^{-hd_\Omega(x,\gamma x)}\leq \frac{d\mu_{\gamma^{-1}x}}{d\mu_x}(\beta) \leq
	 e^{hd_\Omega(x,\gamma x)}.
\]
The result now follows from the quasi-$\Gamma$-invariance of the Patterson-Sullivan measures. 
\end{proof}

Key in our arguments in Section \ref{sec:proper} will be control of the visual and Patterson-Sullivan measures for points in a cusp. These are provided by the following Lemmas.

\begin{lemma}\label{lem:control_visual}
Let $H$ be any open halfspace in the hyperbolic space $\tilde Y_0$ such that $\partial_\infty H$ contains $\tilde \Theta$. Then there exists a horoball $B'_1 \subset \tilde Y_0$ based at $\tilde \Theta$ such that for all $y \in B'_1$, $\nu_y(\partial_\infty H) > \frac{2}{3}\Vert \nu_y\Vert$.
\end{lemma}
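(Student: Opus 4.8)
The plan is to work entirely inside the hyperbolic space $\tilde Y_0 \cong \mathbb H^n$, exploiting the fact that the visual measure $\nu_y$ is simply the pushforward of the round measure on the unit tangent sphere $T^1_y\tilde Y_0$ under the exponential map. The key observation is that the mass $\nu_y(\partial_\infty H)$ is exactly the $\sigma_y$-measure of the set of unit vectors at $y$ whose forward geodesic rays exit into $\partial_\infty H$, and this set is comparable to a spherical cap determined by the directions pointing ``into'' the half-space $H$. As $y$ is pushed deep into the horoball based at $\tilde\Theta \in \partial_\infty H$, I want to show that this cap fills up more and more of the sphere, so that its measure exceeds $\frac{2}{3}\|\nu_y\|$.

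First I would set up coordinates using the upper half-space model of $\mathbb H^n$ with $\tilde\Theta$ at the point at infinity $\infty$, so that horospheres based at $\tilde\Theta$ are horizontal Euclidean hyperplanes and the horoballs $B'_1$ are upper half-spaces $\{x_n > T\}$ for large $T$. Since $\partial_\infty H$ contains $\tilde\Theta = \infty$, the half-space $H$ is (up to an isometry fixing $\infty$) a region bounded by a vertical Euclidean hyperplane, and $\partial_\infty H$ is a Euclidean half-space in the boundary $\mathbb R^{n-1} \cup \{\infty\}$. Next I would estimate, for a point $y$ at Euclidean height $x_n = t$, the proportion of the visual sphere at $y$ mapping into $\partial_\infty H$. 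The crucial geometric point is that geodesics emanating from $y$ that head ``upward'' (toward $\tilde\Theta$) or that exit on the same side of the vertical wall as $y$ all land in $\partial_\infty H$, and as $t \to \infty$ the boundary wall recedes to vanishing visual angle: the directions that can exit on the \emph{wrong} side of the wall are confined to a shrinking cap pointing nearly horizontally toward the wall. A direct computation of the visual angle subtended by the complementary region shows it tends to $0$ as $t \to \infty$.

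Concretely, I would estimate $\nu_y(\partial_\infty H^c)$ by bounding the solid angle of unit vectors $v$ at $y$ for which the geodesic ray $\exp_y(tv)$ crosses the vertical wall and exits on the far side. Since $\tilde\Theta$ lies in $\partial_\infty H$ and $y$ lies in the horoball based at $\tilde\Theta$, the ``upward'' hemisphere at $y$ maps entirely into a neighborhood of $\tilde\Theta$ inside $\partial_\infty H$; only a cap of directions aimed toward and past the wall can fail. By an explicit cross-ratio or spherical-trigonometry estimate, the angular radius of this bad cap shrinks to $0$ as the height $t$ increases, so $\sigma_y(\text{bad cap})/\|\sigma_y\| \to 0$. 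Choosing $T$ large enough that this ratio drops below $\tfrac13$ yields the horoball $B'_1 = \{x_n > T\}$, and then for every $y \in B'_1$ we have $\nu_y(\partial_\infty H) > \tfrac23\|\nu_y\|$, as desired.

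The main obstacle I anticipate is making the ``shrinking bad cap'' estimate uniform and clean: one must verify that the angular measure of the set of directions exiting on the wrong side of the wall genuinely goes to zero \emph{independently of the horizontal position} of $y$ within a horosphere, rather than merely for $y$ directly above the wall. This is where the homogeneity of $\mathbb H^n$ helps decisively — the parabolic translations fixing $\tilde\Theta$ act transitively on each horosphere and preserve both $\nu_y$ (equivariantly) and the family of walls with $\tilde\Theta \in \partial_\infty H$, so it suffices to control the estimate along a single vertical geodesic and then transport it by isometry. I would use this symmetry reduction to avoid a messy position-dependent calculation.
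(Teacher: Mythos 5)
Your normalization of $H$ is where the argument breaks. You claim that, after an isometry fixing $\tilde\Theta=\infty$, a half-space with $\tilde\Theta\in\partial_\infty H$ is bounded by a \emph{vertical} Euclidean hyperplane. That is precisely the case in which the statement is \emph{false}: for a vertical wall, $\infty$ is only a boundary point of $\partial_\infty H$ inside the sphere at infinity, and no horoball works. Indeed, the horoball $\{x_n\geq T\}$ contains points $y=(c,0,\dots,0,t)$ with $\sinh d_{g_0}(y,\partial H)=|c|/t\to 0$ as $t\to\infty$, for which $\nu_y(\partial_\infty H)\to\tfrac12\Vert\nu_y\Vert$, and it also contains points on the wrong side of the wall, arbitrarily far from it, for which $\nu_y(\partial_\infty H)$ is arbitrarily small. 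Consequently your central claim that the wall ``recedes to vanishing visual angle'' as $t\to\infty$ fails, as does the claim that the upward hemisphere of directions lands in a neighborhood of $\tilde\Theta$ contained in $\partial_\infty H$ (in your picture no neighborhood of $\tilde\Theta$ is contained in $\partial_\infty H$). The situation the lemma concerns --- and the one used later in the proof of Proposition \ref{prop:proper hty} --- is that $\tilde\Theta$ is an \emph{interior} point of $\partial_\infty H$: then $\partial H$ is a Euclidean hemisphere whose closure misses $\infty$, $H$ is its unbounded side, and the complement $D=\partial_\infty\tilde Y_0\setminus\partial_\infty H$ is a compact round ball in $\mathbb{R}^{n-1}$. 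It is the visual size of this ball $D$, not of a wall through $\tilde\Theta$, that shrinks as $y$ rises in the cusp.

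Even after correcting the normalization, your uniformity mechanism has a gap: parabolic translations fixing $\tilde\Theta$ do not preserve $H$ --- they translate the hemisphere horizontally --- so transporting $y$ to the vertical axis changes the set whose measure you must estimate. Equivariance only gives $\nu_y(D)=\nu_{\gamma y}(\gamma D)$, so you would need a bound at the axis point that is uniform over \emph{all} horizontal translates $\gamma D$; this is exactly the nontrivial point, and it requires an input your sketch does not supply --- for instance, the rearrangement observation that the density of $\nu_{z_t}$ on $\mathbb{R}^{n-1}$ is proportional to $\bigl(t/(t^2+|\xi|^2)\bigr)^{n-1}$, which is radially decreasing, so that among all unit balls the one centered directly below $z_t$ has maximal $\nu_{z_t}$-measure; with that added, your reduction can be repaired. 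The paper's proof uses a different symmetry: it establishes the bound at axis points $z_t$ (where $D$ is a visual ball of radius $r_t$ decreasing to $0$), notes that every $y$ in the horoball $\{x_n\geq t^*\}$ satisfies $d_{g_0}(y,\partial H)\geq d_{g_0}(z_{t^*},\partial H)$, and then moves $y$ to an axis point of height $\geq t^*$ by the extension $\bar g$ of an isometry $g$ of the wall $\partial H\cong\mathbb{H}^{n-1}$; such extensions preserve $\partial H$, hence fix $D$ setwise and carry perpendicular rays to perpendicular rays, giving $\nu_y(D)=\nu_{z_t}(D)<\tfrac13\Vert\nu_y\Vert$.
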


\begin{proof}
Consider the upper-halfspace model for $\tilde Y_0\cong \mathbb{H}^n$ (see Figure \ref{fig:visual}). Without loss of generality, we may assume $\tilde \Theta$ is the ideal point with infinite vertical coordinate, and that $H$ is the complement of the Euclidean ball of radius one centered at the origin.

Let $D$ be the Euclidean ball of radius one centered at the origin in $\partial_\infty\mathbb{H}^n$. That is, $D$ is the complement in the boundary at infinity of $\partial_\infty H$. It is easy to see that in the visual metric induced on $\partial_\infty\mathbb{H}^n$ by a point $z_t = (0,\ldots 0,t)$, $D$ is a ball of radius $r_t$ with $r_t$ strictly decreasing to  0 as $t\to \infty$.  Therefore we may take $t^*$ sufficiently large that for $z:=z_{t^*}$, $\nu_z(D)<\frac{1}{3}\Vert \nu_z\Vert$. Note that $t^*>1$.  Let $B_1'$ be the horoball centered at $\tilde \Theta$ whose boundary contains $z$; that is, $B_1'$ is all points with vertical coordinate $\geq t^*$. To complete the proof, we show that for any $y \in B_1'$, $\nu_y(D)<\frac{1}{3}\Vert \nu_y \Vert$.

Let $a = (0,\ldots, 0,1)$ and let $b$ be the closest point on $\partial H$ to $y$. Note that $d(z,a)\leq d(y,b)$. The hyperplane $\partial H$ is isometric to $\mathbb{H}^{n-1}$ and any isometry $g$ of $\partial H$ extends uniquely to an (orientation-preserving) isometry $\bar g$ of $\mathbb{H}^n$. Let $g$ be an isometry of $\partial H$ taking $b$ to $a$. Its extension $\bar g$ takes the geodesic ray perpendicular to $\partial H$ at $b$ to the perpendicular ray from $a$. Hence $\bar g(y)$ lies on the vertical axis, and since $d(z,a)\leq d(y,b) = d(\bar g(y), \bar g(b))$, $\bar g(y) = (0,\ldots,0,t)$ for some $t\geq t^*$. Note finally that $\bar g(D) = D$. Therefore, 
\[ \nu_y(D) = \nu_{\bar g(y)}(\bar g(D)) = \nu_{z_t}(D) \leq \nu_{z}(D) <\frac{1}{3}\Vert \nu_z\Vert = \frac{1}{3}\Vert \nu_y\Vert,\]
as desired.
\end{proof}

\tdplotsetmaincoords{60}{110}

\pgfmathsetmacro{\radius}{1}
\pgfmathsetmacro{\thetavec}{0}
\pgfmathsetmacro{\phivec}{0}

\begin{figure}
\begin{center}
\begin{tikzpicture}[scale=2,tdplot_main_coords]

\tdplotsetthetaplanecoords{\phivec}

\draw[dashed, fill=blue!50!white] (\radius,0,0) arc (0:360:\radius);
\shade (\radius,0,0) arc (0:360:\radius);

\draw (0,.5,.95) arc (280:265:2.4);
\draw[dashed] (-.2,.6,1.17) arc (265:240:2.4);
\draw (-1.5, .85, 1) node [circle, fill, inner sep=.03cm] {};
\draw (-1.5, .85, 1) node [anchor = west] {$y$};

\node at (.7,2.4,0) {$\partial_\infty\mathbb{H}^n$} ;
\node at (.7,2.5,2) {$\{ x_n=t^*\}$} ;
\node at (.5,-.3,0) {$D$} ;
\node at (-1,.85) {$\partial H$} ;

\draw (0,0,1) -- (0,0,1.2) ;
\draw[dashed] (0,0,1.2) -- (0,0,2) ;
\draw (0,0,2) -- (0,0,3.5) ;

\shade[ball color=blue!10!white,opacity=0.4] (1cm,0) arc (0:-180:1cm and 5mm) arc (180:0:1cm and 1cm);
\draw (0, 0, 1) node [circle, fill, inner sep=.03cm] () {};
\draw (0, 0, 1) node [anchor = east] () {$a$};
\draw (0, 0, 0) node [circle, fill, inner sep=.03cm] () {};
\draw (0, 0, 0) node [anchor=west] () {$\vec 0$};
\draw (0, 0, 2) node [circle, fill, inner sep=.03cm] () {};
\draw (0, 0, 2) node [anchor=east] () {$z$};
\draw (0, .5, .95) node [circle, fill, inner sep=.03cm] () {};
\draw (0, .5, .95) node [anchor=west] () {$b$};

\draw (2.25,-2,0) -- (-.75,-2,0) -- (-1.05,-1.2,0) ; 
\draw[dashed] (-1.05,-1.2,0) -- (-1.56,.2,0) ;
\draw (-1.56,.2,0) -- (-2.25,2,0) -- (.75,2,0) -- (2.25,-2,0);
\draw[dashed] (2.25,-2,2) -- (-.75,-2,2) -- (-2.25,2,2) -- (.75,2,2) -- (2.25,-2,2);

\draw (0, 0, 3) node [circle, fill, inner sep=.03cm] () {};
\draw (0, 0, 3) node [anchor=east] () {$\bar g y$};

\draw [thick, blue, ->] (0,.4,.95) arc (160:180:1) ;
\node[blue] at (.1,.2,.9) {$g$} ;

\draw [thick, blue, ->] (-1.7,.8,1) arc (120:162:3.4) ;
\node[blue] at (.3,1.1,3) {$\bar g$} ;

\end{tikzpicture}
\end{center}
\caption{The geometry of the proof of Lemma \ref{lem:control_visual}}\label{fig:visual}
\end{figure}
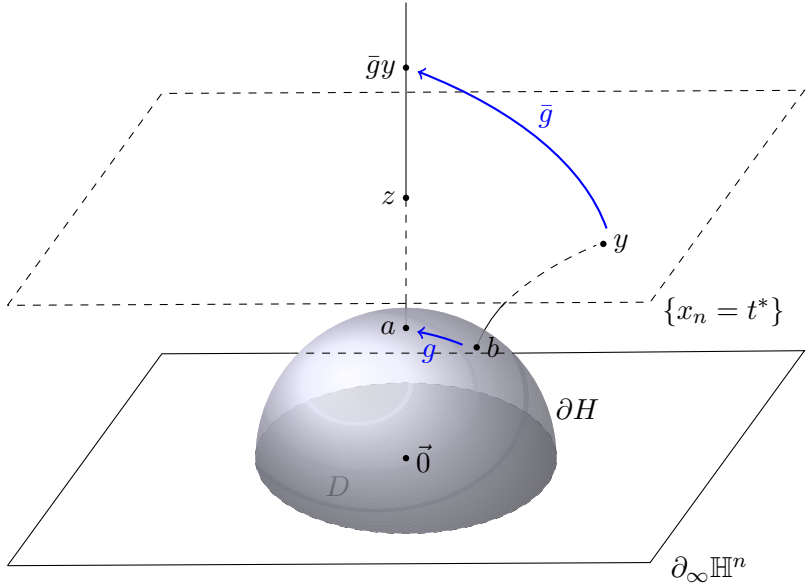

\begin{lemma}[Compare with Lemma 5.2 in \cite{BCS}]\label{lem:two thirds}
Let $H$ be any open half-space in $\tilde Y_0$ for which $\partial_\infty H$ contains the bounded parabolic point $\tilde\Theta$.  Then there exists a horoball $B_1 \subset \tilde Y_0$ based at $\tilde \Theta$ such that for all $x\in \tilde f^{-1}(B_1)$, $\tilde f_*\mu_x(\partial_\infty H) > \frac{2}{3}\Vert \mu_x \Vert$.
\end{lemma}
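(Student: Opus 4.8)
The plan is to pass to the complementary set and push the measure over to the hyperbolic boundary. Work in the upper half-space model of $\tilde Y_0\cong\mathbb H^n$ with $\tilde\Theta$ the point at infinity, so that $\partial_\infty\tilde Y_0$ is identified with $\mathbb R^{n-1}\cup\{\tilde\Theta\}$. Since $H$ is open and $\tilde\Theta$ lies in the interior of $\partial_\infty H$, the set $D:=\partial_\infty\tilde Y_0\smallsetminus\partial_\infty H$ is a compact subset of $\mathbb R^{n-1}$ avoiding $\tilde\Theta$. Writing $\bar\mu_x:=\tilde f_*\mu_x$, the total mass is preserved ($\Vert\bar\mu_x\Vert=\Vert\mu_x\Vert$) and the claim is equivalent to $\bar\mu_x(D)<\frac13\Vert\bar\mu_x\Vert$. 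Because $\tilde f$ is $\Gamma$-equivariant, $\bar\mu_x$ inherits the transformation estimate of Lemma \ref{lem:PS rule}: for every $\gamma\in\Gamma$ and measurable $A\subseteq\partial_\infty\tilde Y_0$,
\[ \bar\mu_x(\gamma A)\ \geq\ e^{-h\,d_\Omega(x,\gamma x)}\,\bar\mu_x(A). \]
This is the only way the Hilbert geometry will enter the estimate.

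The key step is to produce four pairwise disjoint $\Gamma_{\tilde\Theta}$-translates of $D$ that all sit near $\tilde\Theta$ and move $x$ only a little. By Theorem \ref{thm:equiv_cusps} the cusp holonomy is conjugate to a hyperbolic parabolic group, which is virtually $\mathbb Z^{n-1}$, so we may fix a nontrivial parabolic translation $p\in\Gamma_{\tilde\Theta}$ acting on $\mathbb R^{n-1}$ by $v\mapsto v+a$ with $a\neq 0$; then $p^{m}$ translates by $ma$. Since $D$ is bounded, say of Euclidean diameter $\Delta$, the translates $D,\,p^{k}D,\,p^{2k}D,\,p^{3k}D$ are pairwise disjoint as soon as $k\lvert a\rvert>\Delta$; fix such a $k$. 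Note that $k$ depends only on $H$ (through $\Delta$) and on $\Gamma_{\tilde\Theta}$, not on $x$.

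Now fix $\epsilon>0$ small enough that $e^{h\epsilon}<\tfrac43$. Applying Lemma \ref{lem:short loops in cusps} to the single element $p$ with constant $\epsilon/(3k)$ produces an open horoball $U\subseteq\tilde Y_\Omega$ based at $\tilde\Theta$ such that $d_\Omega(x,px)<\epsilon/(3k)$ for all $x\in U$; by the triangle inequality and the fact that $p$ is an isometry, $d_\Omega(x,p^{jk}x)\le 3k\,d_\Omega(x,px)<\epsilon$ for $j=0,1,2,3$. Combining the disjointness with the inherited transformation estimate gives, for every $x\in U$,
\[ \Vert\bar\mu_x\Vert\ \geq\ \sum_{j=0}^{3}\bar\mu_x\!\left(p^{jk}D\right)\ \geq\ \sum_{j=0}^{3} e^{-h\,d_\Omega(x,p^{jk}x)}\,\bar\mu_x(D)\ \geq\ 4\,e^{-h\epsilon}\,\bar\mu_x(D), \]
so $\bar\mu_x(D)\le \tfrac14 e^{h\epsilon}\Vert\bar\mu_x\Vert<\tfrac13\Vert\bar\mu_x\Vert$, as required.

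It remains only to replace the Hilbert horoball $U$ by a horoball $B_1\subseteq\tilde Y_0$ with $\tilde f^{-1}(B_1)\subseteq U$. This is where the geometry of the cusp is used a final time: the osculating ellipsoids of Theorem \ref{thm:ellipsoids} sandwich $\Omega$ near $\tilde\Theta$ between two copies of hyperbolic space, so Hilbert and hyperbolic horoballs based at $\tilde\Theta$ are cofinal, and a sufficiently deep hyperbolic horoball $B_1$ satisfies $\tilde f^{-1}(B_1)\subseteq U$. The displayed inequality then holds for all $x\in\tilde f^{-1}(B_1)$, proving the lemma. The main work is the middle step -- manufacturing enough pairwise disjoint translates of $D$ whose displacement of $x$ is simultaneously controlled; boundedness of $D$ (equivalently, that $\tilde\Theta$ lies in the interior of $\partial_\infty H$) is exactly what lets high powers of a single parabolic translation do this, while Lemma \ref{lem:short loops in cusps} supplies the uniform smallness of the displacement deep in the cusp.
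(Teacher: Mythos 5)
Your core measure estimate is correct, and it takes a genuinely different (and arguably cleaner) counting route than the paper's: where the paper covers $D=\partial_\infty\tilde Y_0\smallsetminus\partial_\infty H$ by finitely many $\langle\gamma\rangle$-translates of a fundamental domain and sums two geometric series, you produce four pairwise disjoint translates $p^{jk}D$ of $D$ itself and bound the total mass from below; combined with Lemma \ref{lem:PS rule} (transported through the equivariance of $\tilde f$, exactly as the paper does) and Lemma \ref{lem:short loops in cusps}, this gives $\tilde f_*\mu_x(D)<\frac13\Vert\mu_x\Vert$, which is equivalent to the claim. Two small caveats on this part. First, the existence of a \emph{pure} translation $p\in\Gamma_{\tilde\Theta}$ does not follow from ``virtually $\mathbb Z^{n-1}$'' alone (a screw motion generates an infinite cyclic group containing no nontrivial translation except its powers' translation parts); you need the Bieberbach structure of the crystallographic group $\Gamma_{\tilde\Theta}$ acting on $\mathbb R^{n-1}$, or you can simply replace an arbitrary infinite-order $\gamma$ by the power that kills its finite-order rotational part. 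Second, you read the hypothesis as ``$\tilde\Theta$ lies in the interior of $\partial_\infty H$,'' i.e.\ $D$ is bounded; the paper's proof needs the same reading (else no finite union of fundamental-domain translates covers $D$), and it is how the lemma is applied later, so this is fair.

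The genuine gap is your final step. You need a hyperbolic horoball $B_1\subset\tilde Y_0$ with $\tilde f^{-1}(B_1)\subseteq U$, and you justify it by Theorem \ref{thm:ellipsoids}, asserting that the osculating ellipsoids make ``Hilbert and hyperbolic horoballs based at $\tilde\Theta$ cofinal.'' But $\mathcal E_{in}$ and $\mathcal E_{out}$ live in the same copy of $\mathbb{RP}^n$ as $\Omega$: Theorem \ref{thm:ellipsoids} compares Hilbert horoballs of $\Omega$ with horoballs of the ellipsoid geometries, all of which are subsets of the projective space containing $\Omega$. It says nothing about horoballs in $\tilde Y_0$, which is a \emph{different} hyperbolic space, related to $\tilde Y_\Omega=\Omega$ only through the topological map $\tilde f$ --- the lift of the homeomorphism $f$ between the two structures on $Y$ --- over which one has no metric control at all. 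The statement ``$\tilde f^{-1}(B_1)\subseteq U$ for $B_1$ deep enough'' is exactly where the properness of $f$ must enter, which is the point of this entire section of the paper: one passes to the quotient, uses that projections of deep horoballs form neighborhood bases of the corresponding ends of $Y_0$ and $Y_\Omega$, uses properness of the homeomorphism $f$ to push one basis inside the other, and then lifts back (using, e.g., that $\tilde f^{-1}(B_1)$ is connected and $\Gamma_{\tilde\Theta}$-invariant and that deep horoballs are precisely invariant, so the preimage lands in $U$ itself rather than merely in $\Gamma\cdot U$). This is what the paper compresses into ``by Lemma \ref{lem:short loops in cusps} and the properness of $\tilde f$.'' Your ellipsoid argument cannot substitute for it, because it never leaves the $\Omega$ side of the map.
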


\begin{proof}
For any non-identity element $\gamma$ of the parabolic subgroup $\Gamma_{\tilde\Theta}$ stabilizing $\tilde
\Theta$, choose a fundamental domain $D_\gamma$ for the action of $\gamma$ on $\partial_\infty \tilde Y_0$. Then there exist integers $a_\gamma, k_\gamma$ such that 
\[ \partial_\infty \tilde Y_0 \setminus \partial_\infty H \subseteq \bigcup_{i=a_\gamma}^{a_\gamma+k_\gamma} \gamma^i (D_\gamma).\]

Then we can compute as follows:
\begin{multline*}
  \tilde f_*\mu_x\left( \partial_\infty \tilde Y_0 \setminus \partial_\infty H\right) \leq \sum_{i=0}^{k_\gamma} \tilde f_*\mu_x\left( \gamma^{i+a_\gamma}(D_\gamma) \right)  
  \leq \sum_{i=0}^{k_\gamma} e^{h d_\Omega(x,\gamma^i x)}\tilde f_*\mu_x\left(\gamma^{a_\gamma}
  (D_\gamma)\right) \\   \leq \sum_{i=0}^{k_\gamma} e^{i \cdot h  d_\Omega(x,\gamma x)}\tilde
  f_*\mu_x\left(\gamma^{a_\gamma} (D_\gamma)\right)  = \frac{1-e^{(k_\gamma+1) \cdot h
  d_\Omega(x,\gamma x)}}{1- e^{h d_\Omega(x,\gamma x)}}\tilde f_*\mu_x\left( \gamma^{a_\gamma}
  (D_\gamma) \right).
\end{multline*}
The second inequality comes from applying Lemma \ref{lem:PS rule} and using the $\Gamma$-equivariance of $\tilde f$. The third comes from a simple application of the triangle inequality.
On the other hand,
\begin{multline*}
	\Vert \tilde f_*\mu_x \Vert = \sum_{i\in\mathbb{Z}} \tilde f_*\mu_x\left(
	\gamma^{i+a_\gamma} (D_\gamma)\right) 
		  \geq \sum_{i\in\mathbb{Z}} e^{-h d_\Omega(x, \gamma^i x)} \tilde f_*\mu_x\left(\gamma^{a_\gamma} (D_\gamma)\right) \\
		  \geq \sum_{i\in\mathbb{Z}} e^{-|i| \cdot h d_\Omega(x, \gamma x)} \tilde f_*\mu_x\left( \gamma^{a_\gamma}(D_\gamma) \right) 
		  \geq \frac{1}{1-e^{-h d_\Omega(x, \gamma x)}} \tilde f_*\mu_x\left( \gamma^{a_\gamma} (D_\gamma)\right).
\end{multline*}
Again, the first inequality uses Lemma \ref{lem:PS rule} and the second the triangle inequality.
Therefore,
\begin{align*} 
	\frac{\tilde f_*\mu_x\left( \partial_\infty \tilde Y_0 \setminus \partial_\infty H\right)}{\Vert \tilde f_*\mu_x \Vert } &\leq \frac{1-e^{(k_\gamma+1) \cdot h d_\Omega(x,\gamma x)}}{1- e^{h  d_\Omega(x,\gamma x)}} 1-e^{h d_\Omega(x, \gamma x)} \\
		& = e^{k_\gamma h d_\Omega(x,\gamma x)} - e^{- h d_\Omega(x,\gamma x)}.
\end{align*}

Now choose $\epsilon_0>0$ so small that $e^{k_\gamma \cdot h \epsilon_0} - e^{- h \epsilon_0}<\frac{1}{3}.$ By Lemma \ref{lem:short loops in cusps} and the properness of $\tilde f$, we can choose a horoball $B_1$ centered at $\tilde \Theta$ such that for all $x\in \tilde f^{-1}(B_1)$, we have $d_\Omega(x, \gamma x)<\epsilon_0$. Therefore, for all such $x$ the argument above bounds $\frac{\tilde f_*\mu_x(\partial_\infty H)}{\Vert \tilde  \mu_x \Vert}$ as desired, since $\|\tilde f_\ast \mu_x\|=\|\mu_x\|$ by definition of the push-forward.
\end{proof}


%

\section{The natural map and its Jacobian}\label{sec:bcg}

We now turn to the natural map, the main tool of the Besson--Courtois--Gallot approach. The arguments in this section will only be sketched as they are standard adaptations of the barycenter method. Details specific to the Hilbert geometry setting can be found in \cite{abc} (or \cite{boland-newberger} for the Finsler manifold setting), and details on the method in general can be found in \cite{BCG1} with a survey in \cite{CFrecent}.

\begin{definition}\label{defn:natural map}
The \emph{natural map} is $\tilde \Phi \colon \tilde
Y_\Omega \to \tilde Y_0$ 
given by 
\[ \tilde \Phi(x) = bar(\tilde f_* \mu_x).\]
\end{definition}

It is easy to check that the natural map is $\Gamma$-equivariant, and so descends to a natural map $\Phi:Y_\Omega \to Y_0$. This map is used to compare the volumes of $Y_\Omega$ and $Y_0$ since its Jacobian can be bounded.

\begin{defn}\label{defn:distortion}
Let $g$ be any Riemannian metric on $Y_\Omega$. We define the \emph{eccentricity factor of
$F_\Omega$} with respect to $g$ as
\[ 
  N(F_\Omega, g) := \sup_{y\in Y_\Omega} \max_{v\in S_{g}(1,y)} \frac{F_\Omega(v)^n
    \Vol_{g}(B_{F_\Omega}(1,y))}{\Vol_{g}(B_{g}(1,y))} 
\]
where $B_-(1,y)$ is the ball of radius of 1 in the tangent space at $y$ with respect to the given norm.
\end{defn}

It is an easy exercise to see that $N(F_\Omega, g) \geq 1$.

\begin{rmk}
We note that, \emph{a priori}, $N(F_\Omega,g)$ may be infinite, since $Y_\Omega$ is non-compact. The statement of Theorem \ref{thm:main} holds in this case, but does so trivially and Corollary \ref{cor:uniform_volume_bound} does not follow in this case. In Section \ref{sec:main} we will show that there is in fact a $g$ such that $N(F_\Omega, g)<\infty$.
\end{rmk}

\begin{prop}\label{prop:Jacobian bound}
For any Riemannian metric $g$ on $Y_\Omega$, the Jacobian of $\tilde \Phi$ at any point $y \in \tilde Y_\Omega$ satisfies
  \[ 
    |Jac(\tilde \Phi)(y)| \leq \frac{h(F_\Omega)^n}{h(g_0)^n} N(F_\Omega, g).
  \]
If equality holds at any $y$, then $D_y\tilde \Phi\colon (T_y\tilde Y_\Omega, F_\Omega) \to (T_{\tilde \Phi(y)}\tilde Y_0, g_0)$ is an isometry composed with a homothety.
\end{prop}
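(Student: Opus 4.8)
The plan is to estimate the Jacobian of $\tilde\Phi$ by differentiating the barycenter equation that defines it, which is the standard Besson--Courtois--Gallot computation adapted to the Finsler setting. Recall that $\tilde\Phi(x)$ is characterized by the implicit equation
\[
  \int_{\partial_\infty\tilde Y_0} v\bigl(\tilde\Phi(x),\beta\bigr)\, d(\tilde f_*\mu_x)(\beta) = 0,
\]
i.e. $d\mathcal B(\tilde\Phi(x),\tilde f_*\mu_x)=0$. First I would differentiate this identity in $x$. The derivative splits into two pieces: one coming from the $x$-dependence of the measure $\tilde f_*\mu_x$ (via the transformation rule $d\mu_x/d\mu_y = e^{-hB^\Omega_{y,\beta}(x)}$, whose $x$-derivative produces a factor $-h\,dB^\Omega_{\cdot,\beta}(x) = h\,v(x,\beta)$), and one coming from the $\tilde\Phi(x)$-dependence of $v(\tilde\Phi(x),\beta)$, which produces the Hessian of the Busemann function on the hyperbolic side. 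This yields a relation $H\circ D_x\tilde\Phi = K$ between two symmetric-type forms, where $H$ is an integral of the Hessian of $B^0$ against the (normalized) measure and $K$ is an integral of a rank-one term $v(x,\cdot)\otimes v(\tilde\Phi(x),\cdot)$.

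Next I would normalize the pushed-forward measure to a probability measure $\mu_x/\|\mu_x\|$ and introduce the two positive symmetric endomorphisms
\[
  H(x) = \int dd\,B^0_{o,\beta}(\tilde\Phi(x))\,\frac{d(\tilde f_*\mu_x)(\beta)}{\|\mu_x\|},
  \qquad
  K(x) = \int v(x,\beta)\otimes v(\tilde\Phi(x),\beta)\,\frac{d(\tilde f_*\mu_x)(\beta)}{\|\mu_x\|},
\]
so that the differentiated equation reads $H(x)\,D_x\tilde\Phi = h\,K(x)$ (where on the domain side the pairing uses the Finsler norm $F_\Omega$). Taking determinants gives $|\mathrm{Jac}(\tilde\Phi)(x)| = h^n\,|\det K(x)|/\det H(x)$, up to the correct volume-normalization factors between the two metrics. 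The core of the argument is then purely linear-algebraic: one bounds $|\det K|/\det H$ using the Cauchy--Schwarz inequality for the two forms together with the explicit structure of the hyperbolic Hessian $dd\,B^0 = g_0 - dB^0\otimes dB^0$, whose trace and determinant against a probability measure are controlled. This is exactly the BCG inequality, and it produces the factor $h(g_0)^{-n}$ (from the symmetric-space normalization) together with the eccentricity factor $N(F_\Omega,g)$, which accounts for the discrepancy between the Finsler ball $B_{F_\Omega}(1,y)$ and the Riemannian ball $B_g(1,y)$ when passing from the norm $F_\Omega$ on the domain to the auxiliary Riemannian metric $g$ used to compute the Jacobian.

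For the equality case, I would trace through the linear-algebra inequalities and identify when each becomes an equality. The Cauchy--Schwarz step is sharp precisely when $v(x,\beta)$ and $D_x\tilde\Phi^*\,v(\tilde\Phi(x),\beta)$ are proportional for $\mu_x$-almost every $\beta$, and the determinant/trace estimates for the hyperbolic Hessian are sharp only when the measure's covariance form is a scalar multiple of $g_0$ (the isotropy condition). Together these force $D_x\tilde\Phi$, viewed as a map $(T_x\tilde Y_\Omega,F_\Omega)\to(T_{\tilde\Phi(x)}\tilde Y_0,g_0)$, to carry the Finsler unit ball to a round Riemannian ball, i.e. to be a homothety composed with a linear isometry; I would record that the eccentricity factor is realized and that $F_\Omega$ restricted to $T_x\tilde Y_\Omega$ is therefore a scaled Euclidean norm at the relevant points. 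The main obstacle I anticipate is the bookkeeping in the Finsler-to-Riemannian comparison: because the domain carries only a Finsler norm, the determinant of $D_x\tilde\Phi$ must be measured against the Hilbert volume form, and relating this cleanly to the Riemannian Jacobian is precisely what the factor $N(F_\Omega,g)$ in Definition~\ref{defn:distortion} is designed to absorb, so the delicate point is verifying that the eccentricity factor enters with the correct power and that the equality analysis remains valid after this substitution. As these computations are standard adaptations of \cite{BCG1} and \cite{abc}, I would cite those sources for the detailed estimates and only highlight the modifications needed in the finite-volume Finsler setting.
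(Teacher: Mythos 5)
Your proposal is correct and takes essentially the same route as the paper: the paper's ``proof'' of this proposition is simply a citation to \S3.2 of \cite{abc} (itself following Boland--Newberger's Finsler adaptation of \cite{BCG1}), noting that the argument lives entirely on the universal covers so compactness of the quotient is never used, and your sketch reproduces exactly that BCG computation---implicit differentiation of the barycenter equation, the $H$/$K$ endomorphisms, the linear-algebra estimate with the hyperbolic Hessian $g_0 - dB^0\otimes dB^0$, and the isotropy analysis for equality---before deferring to the same sources. The only point worth flagging is that differentiating the transformation rule requires differentiability of $B^\Omega_{\cdot,\beta}$, which the paper justifies via the $C^{1+\alpha}$ regularity of $\partial\Omega$ from \cite{cramponmarquis14}; your formal differentiation implicitly assumes this.
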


Proposition \ref{prop:Jacobian bound} is proved in \S3.2 of \cite{abc}, following Boland and Newberger's \cite{boland-newberger} adaptation of \cite{BCG1}. The argument is conducted entirely at the level of the universal covers, so it works in any setting where the natural map can be defined and differentiated, without the requirement that the quotient be compact. 

In the argument in \S3.2 of \cite{abc}, it is noted that differentibility of the natural map hinges on differentiability of the Busemann functions $B_{x,\beta}^\Omega(-)$ (Definition \ref{def:busemannfunctions}). In the setting of strictly convex Hilbert geometries admitting a finite volume quotient, the boundary is $C^{1+\alpha}$ for some $\alpha>0$ at every point in $\partial\Omega$ \cite[Corollary 1.5]{cramponmarquis14}. Since the Finsler metric has the same regularity as the boundary, the Busemann functions $B_{x,\beta}^\Omega(y)$ are differentiable in $y$.

%

\section{The natural map is proper}\label{sec:proper}

To use the Jacobian bound given by \ref{prop:Jacobian bound} to compare the volumes of $Y_\Omega$ and $Y_0$, we need to know that $\Phi$ is proper. This was also the case for the extensions \cite{BCS} and \cite{storm} of entropy rigidity to the Riemannian, finite-volume setting. Our proof closely follows \cite{BCS}, avoiding some of the complications in \cite{storm} by relying on the particularly nice geometric properties of Hilbert geometries.

We prove that $\Phi$ is proper by proving that it is homotopic via a proper homotopy to the proper map $f$. It is easy to check that a map proper homotopic to a proper map is itself proper. The particular homotopy we use is as follows:

\begin{definition} \label{def:properhomotopy}
Let
\[ \tilde \Psi: [0,1] \times \tilde Y_\Omega \to \tilde Y_0\]
\[ (t,x) \mapsto bar\left(t\tilde f_*\mu_x + (1-t) \nu_{\tilde f(x)}\right).\]
\end{definition}

By Definition \ref{defn:natural map}, $\tilde \Psi(1,x)=\tilde \Phi(x)$, and by Lemma \ref{lem:bar visual}, $\tilde \Psi(0,x)=\tilde f(x)$.

\begin{lemma}
$\tilde \Psi$ is continuous.
\end{lemma}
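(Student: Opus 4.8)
The plan is to write $\tilde\Psi$ as the composition of the parameter-to-measure map $(t,x)\mapsto \lambda_{t,x}:= t\,\tilde f_*\mu_x+(1-t)\nu_{\tilde f(x)}$, taking values in the finite non-atomic measures on the compact space $\partial_\infty\tilde Y_0$, with the barycenter map $\lambda\mapsto bar(\lambda)$, and to prove each factor is continuous. The barycenters are all defined along the homotopy because each $\lambda_{t,x}$ is a convex combination of the non-atomic measures $\tilde f_*\mu_x$ (the Patterson--Sullivan measures have no atoms, and $\tilde f$ is a homeomorphism of the boundaries) and $\nu_{\tilde f(x)}$, hence non-atomic of positive total mass, so Definition \ref{def:barycenter} applies throughout.

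For the first factor I would establish weak-$\ast$ continuity of the three ingredients separately. First, $x\mapsto \mu_x$ is continuous: the transformation rule (Definition \ref{def:pattersonsullivan}) gives $\tfrac{d\mu_x}{d\mu_{x_0}}(\beta)=e^{-hB^\Omega_{x_0,\beta}(x)}$, and since the Busemann functions are jointly continuous and $\partial_\infty\tilde Y_\Omega$ is compact, this Radon--Nikodym derivative tends to $1$ uniformly in $\beta$ as $x\to x_0$, so $\mu_x\to\mu_{x_0}$ in total variation; pushing forward by the homeomorphism $\tilde f$ preserves weak-$\ast$ continuity. Second, $x\mapsto \nu_{\tilde f(x)}$ is continuous because $\tilde f$ is continuous and the visual measures $\nu_y$ of Definition \ref{def:visualmeasure} vary continuously with the basepoint $y$, the angular metric on $T^1_y\tilde Y_0$ and the map $\exp$ depending smoothly on $y$ in hyperbolic space. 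Finally, the convex combination $(t,\mu_1,\mu_2)\mapsto t\mu_1+(1-t)\mu_2$ is jointly continuous, so $(t,x)\mapsto\lambda_{t,x}$ is continuous.

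For the second factor, continuity of the barycenter map is the implicit function theorem argument of \cite[Appendix A]{BCG1}: $bar(\lambda)$ is the unique zero of the gradient $y\mapsto d_y\mathcal B(y,\lambda)=-\int_{\partial_\infty\tilde Y_0} v(y,\alpha)\,d\lambda(\alpha)$, and the Hessian of $\mathcal B(\cdot,\lambda)$ is positive definite at this zero by the strict convexity recorded in Definition \ref{def:barycenter}, so the zero depends continuously (indeed smoothly) on $\lambda$. Composing the two continuous factors yields continuity of $\tilde\Psi$.

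The step I expect to require the most care is ruling out escape of the barycenters to infinity when passing to the weak-$\ast$ limit, since $\tilde Y_0$ is noncompact and the implicit function theorem is purely local; it must be supplemented by an a priori location bound. Here Lemma \ref{lem:control_bar} does the work: if $(t_n,x_n)\to(t_0,x_0)$ but $bar(\lambda_{t_n,x_n})$ escaped toward some $\xi\in\partial_\infty\tilde Y_0$, then, using that the limit measure $\lambda_{t_0,x_0}$ is non-atomic and so charges only a small mass near $\xi$, together with weak-$\ast$ convergence, most of the mass of $\lambda_{t_n,x_n}$ would lie in a halfspace $H$ with $\xi\notin\overline{\partial_\infty H}$, forcing $bar(\lambda_{t_n,x_n})$ to stay within bounded distance $D$ of $H$ and contradicting the escape. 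With the barycenters confined to a compact set, the standard argument that minimizers of the locally uniformly convergent strictly convex functions $\mathcal B(\cdot,\lambda_{t_n,x_n})$ converge to the minimizer of $\mathcal B(\cdot,\lambda_{t_0,x_0})$ completes the proof.
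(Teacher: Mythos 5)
Your proof is correct and follows essentially the same route as the paper's: factor $\tilde\Psi$ through the measure-valued map $(t,x)\mapsto t\,\tilde f_*\mu_x+(1-t)\nu_{\tilde f(x)}$ followed by the barycenter map, and verify that each factor is continuous. The paper's proof merely asserts continuity of $x\mapsto\mu_x$, of $\nu_-$, and of $bar$, so your justifications of those assertions---total variation convergence via the transformation rule and the $1$-Lipschitz property of Busemann functions, and especially the a priori confinement of the barycenters using Lemma \ref{lem:control_bar} before invoking the local strict-convexity argument---supply exactly the details the paper leaves implicit.
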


\begin{proof}
First, $\tilde f$, $x\mapsto \mu_x$, and $f_*:\mathcal{M}(\partial_\infty\tilde Y_\Omega) \to \mathcal{M}(\partial_\infty \tilde Y_0)$ are continuous. Then using that $\nu_-:\tilde Y_0 \to \mathcal{M}(\partial_\infty \tilde Y_0)$ is continuous, $x \mapsto \nu_{\tilde f(x)}$ is continuous. Finally $bar:\mathcal{M}(\partial_\infty \tilde Y_0)\to \tilde Y_0$ is continuous. So $\tilde \Psi$ is continuous in $x$ for all $t\in [0,1]$. 

Continuity in $t$ follows from the continuity of $bar$. 
\end{proof}

It is clear that $\tilde \Psi$ is $\Gamma$-equivariant in its second variable, and so descends to a homotopy of maps between $Y_\Omega$ and $Y_0$. Our goal is to prove:

\begin{prop}\label{prop:proper hty}
$\Psi_t$ is a proper homotopy. (That is, it is proper as a map $[0,1]\times Y_\Omega \to Y_0$.)
\end{prop}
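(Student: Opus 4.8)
The plan is to prove properness of the homotopy $\Psi_t$ by showing that if a sequence $x_k \in Y_\Omega$ leaves every compact set, then $\Psi_t(x_k)$ leaves every compact set in $Y_0$, uniformly in $t \in [0,1]$. Since $Y_\Omega$ has finitely many cusps and $f$ is proper, leaving every compact set means (after passing to a subsequence) that $x_k$ eventually enters and stays in a single cusp $\Theta$, heading out the end. Working in the universal cover, I would lift to a sequence $\tilde x_k \in \tilde Y_\Omega$ converging to a fixed parabolic point $\tilde\Theta \in \partial_\infty \tilde Y_\Omega$, and it suffices to show $\tilde\Psi(t, \tilde x_k)$ converges to $\tilde f(\tilde\Theta) \in \partial_\infty \tilde Y_0$ as $k \to \infty$, uniformly in $t$. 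Because $\tilde f$ is a homeomorphism on boundaries, $\tilde f(\tilde\Theta)$ is exactly the parabolic point in $\partial_\infty \tilde Y_0$ to which the corresponding cusp of $Y_0$ is asymptotic.

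The key mechanism is to trap the barycenter inside an arbitrarily small horoball at $\tilde f(\tilde\Theta)$ using the mass-concentration lemmas together with the halfspace estimate of Lemma~\ref{lem:control_bar}. Here is the central step. Fix any horoball $B_0$ centered at $\tilde f(\tilde\Theta)$ in $\tilde Y_0$; I want to show $\tilde\Psi(t,\tilde x_k) \in B_0$ for all large $k$ and all $t$. I would choose a closed halfspace $H$ contained in $B_0$ whose ideal boundary $\partial_\infty H$ contains $\tilde f(\tilde\Theta)$, positioned so that the bounding hyperplane $\partial H$ lies deep inside $B_0$ (so that $d_{g_0}(H, \partial B_0) > D$, where $D$ is the constant from Lemma~\ref{lem:control_bar}). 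Then by Lemma~\ref{lem:control_visual} there is a horoball $B_1'$ based at $\tilde f(\tilde\Theta)$ on which $\nu_y(\partial_\infty H) > \tfrac{2}{3}\|\nu_y\|$, and by Lemma~\ref{lem:two thirds} there is a horoball $B_1$ based at $\tilde f(\tilde\Theta)$ such that for $x \in \tilde f^{-1}(B_1)$ we have $\tilde f_*\mu_x(\partial_\infty H) > \tfrac{2}{3}\|\mu_x\|$. Taking the smaller of $B_1$ and $B_1'$, and using that $\tilde f(\tilde x_k)$ eventually enters this horoball (by properness of $\tilde f$ and convergence $\tilde x_k \to \tilde\Theta$), the convex combination measure $\lambda = t\tilde f_*\mu_{\tilde x_k} + (1-t)\nu_{\tilde f(\tilde x_k)}$ satisfies $\lambda(\partial_\infty H) > \tfrac{2}{3}\|\lambda\|$ for every $t \in [0,1]$, because the bound $> \tfrac{2}{3}$ on each piece is preserved by convex combination. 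Lemma~\ref{lem:control_bar} then gives $d_{g_0}(bar(\lambda), H) \le D$, which places $bar(\lambda) = \tilde\Psi(t,\tilde x_k)$ inside $B_0$ by the choice of $H$, uniformly in $t$.

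Since $B_0$ was an arbitrary horoball at $\tilde f(\tilde\Theta)$, this shows $\tilde\Psi(t,\tilde x_k) \to \tilde f(\tilde\Theta) \in \partial_\infty \tilde Y_0$ uniformly in $t$; descending to the quotient, $\Psi_t(x_k)$ exits every compact set of $Y_0$, uniformly in $t$, which is exactly properness of the map $[0,1]\times Y_\Omega \to Y_0$. I would present the argument via the contrapositive: assume for contradiction that there is a compact $K \subset Y_0$ and sequences $t_k \in [0,1]$, $x_k \to \infty$ in $Y_\Omega$ with $\Psi_{t_k}(x_k) \in K$, then derive a contradiction from the horoball-trapping above once $x_k$ is confined to a single cusp.

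The main obstacle I anticipate is bookkeeping the passage between the quotient and the universal cover correctly, and in particular handling the choice of lift so that all $\tilde x_k$ accumulate at a \emph{single} parabolic point. This requires knowing that each cusp corresponds to a bounded parabolic point whose stabilizer acts cocompactly on horospheres (available from Theorem~\ref{thm:thickthin} and the surrounding discussion), so that a sequence going out the end in the quotient can be lifted, after applying deck transformations, to a sequence converging to one fixed $\tilde\Theta$; the uniformity in $t$ is then automatic because the $\tfrac{2}{3}$-mass estimates of Lemmas~\ref{lem:control_visual} and~\ref{lem:two thirds} hold for both measures simultaneously and survive convex combination. A secondary technical point is ensuring the horoballs $B_1$ and $B_1'$ can be chosen compatibly (nested, based at the same point) and that the chosen halfspace $H$ genuinely sits at Hausdorff distance $>D$ from the complement of the target horoball $B_0$; this is a straightforward hyperbolic geometry computation in the upper half-space model.
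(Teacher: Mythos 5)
Your overall framework (sequential properness, reduction to a single cusp, trapping the barycenter, uniformity in $t$ via convex combinations of the two $\frac{2}{3}$-mass bounds) parallels the paper's reduction via Lemma~\ref{lem:proper condition}, and your observation that the $\frac{2}{3}$ bounds survive convex combination is exactly the computation in the paper's proof. However, your central trapping step is geometrically impossible as stated: there is \emph{no} closed halfspace $H$ with $\tilde f(\tilde\Theta)\in\partial_\infty H$ that is contained in a horoball $B_0$ based at $\tilde f(\tilde\Theta)$ --- indeed no halfspace is contained in any horoball. The ideal boundary of a halfspace is a closed round ball with nonempty interior in $\partial_\infty\tilde Y_0\cong S^{n-1}$, whereas the closure of a horoball meets $\partial_\infty\tilde Y_0$ only at its center; concretely, in the upper half-space model with $\tilde f(\tilde\Theta)=\infty$, any halfspace whose ideal boundary contains $\infty$ has the form $\{x_1\geq a\}$ or $\{|x|\geq r\}$ and so contains points of arbitrarily small height, hence neither it nor its $D$-neighborhood can lie in $\{x_n>c\}$. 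Consequently the step ``$d_{g_0}(bar(\lambda),H)\leq D$ places $bar(\lambda)$ inside $B_0$'' cannot be arranged; the ``straightforward hyperbolic geometry computation'' you defer is in fact false, and Lemma~\ref{lem:control_bar} by itself only ever confines the barycenter to a halfspace-shaped region, never to a horoball.

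The missing idea --- and the key point of the paper's proof --- is to use the parabolic group $\Gamma_{\tilde\Theta}$ to upgrade halfspace trapping to cusp-neighborhood trapping. The paper fixes nested halfspaces $H_1\subset H_0$ with the $D$-neighborhood of $H_1$ inside $H_0$, and, crucially, chooses $H_0$ so that $\tilde Y_0\setminus H_0$ contains a fundamental domain for the $\Gamma_{\tilde\Theta}$-action on the horosphere $\partial B_0$. The mass-concentration lemmas (Lemmas~\ref{lem:control_visual} and~\ref{lem:two thirds}) together with Lemma~\ref{lem:control_bar} then give only the halfspace statement $\tilde\Psi_t(\tilde x)\in H_0$; but since the horoball $B_1$ and the measure families are $\Gamma_{\tilde\Theta}$-equivariant, the same conclusion holds for every lift $\gamma\tilde x$ with $\gamma\in\Gamma_{\tilde\Theta}$, so that $\tilde\Psi_t(\tilde x)\in\bigcap_{\gamma\in\Gamma_{\tilde\Theta}}\gamma H_0$, and the fundamental-domain condition forces this intersection into $B_0$ --- equivalently, forces $\Psi_t(x)$ into the cusp neighborhood $U_0$ downstairs. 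With this equivariance step inserted, your sequential formulation would go through, since the sets doing the final trapping are $\Gamma_{\tilde\Theta}$-invariant and project to arbitrarily small cusp neighborhoods of $Y_0$. (A smaller soft spot: convergence $\tilde\Psi(t,\tilde x_k)\to\tilde f(\tilde\Theta)$ in $\partial_\infty\tilde Y_0$ alone does not imply that the projections leave every compact set of $Y_0$ --- the orbit of a point under a single deck transformation converges to a boundary point while projecting to a single point --- so it really must be the horoball trapping, not boundary convergence, that carries the argument.)
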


We approach the proof of Proposition \ref{prop:proper hty} via the following Lemma. The scheme of the proof is similar to the approaches to Theorem 3.1 and Proposition 5.1 in \cite{BCS}.

\begin{lemma}\label{lem:proper condition}
$\Psi_t$ is a proper homotopy if the following holds:
\begin{quote}
	For any cusp $\Theta$ in $Y_0$ and any neighborhood $U_0$ of $\Theta$, there exists a neighborhood $U_1$ of $\Theta$ such that if $\Psi_0(x)=f(x)\in U_1$, then $\Psi_t(x)\in U_0$ for all $t\in[0,1]$.
\end{quote}
\end{lemma}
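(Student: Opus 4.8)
The plan is to show that the stated local condition near each cusp, combined with properness of $f$ on the complement of the cusps, forces $\Psi_t$ to be proper. The key structural fact is that $Y_0$ is finite volume but noncompact, so by the thick-thin decomposition it decomposes as a compact core together with finitely many cusp neighborhoods $\Theta_1,\ldots,\Theta_m$. Properness of the homotopy means that the preimage of every compact set $K\subset Y_0$ under $\Psi\colon[0,1]\times Y_\Omega\to Y_0$ is compact; equivalently, I want to show that if $(t_j,x_j)$ is a sequence with $x_j\to\infty$ in $Y_\Omega$ (i.e. eventually leaving every compact set), then $\Psi_{t_j}(x_j)\to\infty$ in $Y_0$.

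\textbf{Using the given condition.}
First I would fix an exhaustion of $Y_0$ by compact sets and a system of cusp neighborhoods. Given a compact $K\subset Y_0$, choose cusp neighborhoods $U_0^{(i)}$ of each $\Theta_i$ that are disjoint from $K$. For each cusp the hypothesis supplies a smaller neighborhood $U_1^{(i)}$ with the property that $f(x)\in U_1^{(i)}$ implies $\Psi_t(x)\in U_0^{(i)}$ for all $t$; in particular $\Psi_t(x)\notin K$ for all $t$. Now let $V=Y_0\smallsetminus\bigcup_i U_1^{(i)}$, which is compact since we have removed open cusp neighborhoods from a finite-volume thin part. Because $f$ is proper, $f^{-1}(V)$ is compact in $Y_\Omega$. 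For any $x\in Y_\Omega$, either $f(x)\in U_1^{(i)}$ for some $i$, in which case $\Psi_t(x)\in U_0^{(i)}$ for all $t$ and hence avoids $K$, or $f(x)\in V$ and so $x\in f^{-1}(V)$. Thus
\[
  \Psi^{-1}\bigl([0,1]\times K\bigr)\subseteq [0,1]\times f^{-1}(V),
\]
which exhibits the preimage as a closed subset of a compact set, hence compact. This is exactly properness of $\Psi_t$ as a map $[0,1]\times Y_\Omega\to Y_0$.

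\textbf{The main point and remaining care.}
The real content is packaged into the hypothesis, so the proof of this lemma is essentially a bookkeeping argument turning the local cusp statement into global properness; the substantive work of verifying the cusp condition is deferred. The one step requiring genuine care is ensuring the set $V$ is honestly compact: this uses that there are only finitely many cusps (maximal rank cusps, so each end is a single compact component) and that removing a neighborhood of each cusp from the finite-volume manifold $Y_0$ leaves a compact thick part. I would also want to be slightly careful that the neighborhoods can be chosen nested/disjoint and that the inclusion above holds simultaneously for all $t\in[0,1]$, which is guaranteed because the cusp condition is stated uniformly in $t$. I expect no serious obstacle here; the difficulty of the whole argument lives in establishing the condition of the lemma itself, which is where Lemmas \ref{lem:two thirds} and \ref{lem:control_visual} together with the barycenter-control Lemma \ref{lem:control_bar} will be brought to bear.
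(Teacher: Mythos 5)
Your proof is correct and takes essentially the same route as the paper's: reduce to the complement of cusp neighborhoods, apply the contrapositive of the cusp condition together with properness of $f$, and conclude that the preimage of $K$ is a closed subset of $[0,1]\times f^{-1}(V)$, hence compact. The only quibble is notational: the preimage should be written $\Psi^{-1}(K)$ rather than $\Psi^{-1}\bigl([0,1]\times K\bigr)$, since $K$ is a subset of the target $Y_0$ and $\Psi$ maps $[0,1]\times Y_\Omega\to Y_0$.
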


This condition can be restated in the following way. Given any neighborhood $U_0$ of a cusp in $Y_0$, if a point $x$ is sent by $f$ sufficiently far into that cusp (i.e., into $U_1)$ then the entire track $\{\Psi_t(x)\colon t\in[0,1]\}$ of $x$ through the homotopy remains in $U_0$. (See Figure \ref{fig:proper condition}.)

\begin{figure}
\centering
\begin{subfigure}{.4\textwidth}
  \centering

\begin{tikzpicture}[scale=1.2] 

\draw [fill, blue!40] plot [smooth cycle, tension=.1] coordinates {(2,1.2) (1.5,1.08) (1,1) (.5,.95) (.2,.97) (0,1) (-.15,.7) (-.2,.4) (-.1,.05) (.1,-.18) (.24,-.28) (.5,-.03) (.7,.11) (1,.3) (1.5,.57) (2.1,.87)};
\draw [fill, purple!40] plot [smooth cycle, tension=.1] coordinates {(3,1.5) (2,1.2) (1.95,1) (2.1,.87) (3,1.3)};

\draw plot [smooth, tension=.7] coordinates {(3,1.3) (1,.3) (0,-.5) (-.5,-1)} ;

\draw plot [smooth, tension=.7] coordinates {(-1.5,1.7) (-.5,1.1) (1,1) (3,1.5)};

\draw[dashed] (0,1) arc (140:240:.85cm);
\draw[dashed] (2,1.2) arc (140:240:.2cm);

\node [rotate=115, yscale=2.5, xscale=1.2] at (2.7,.7) {\{};
\node at (3,.4) {$U_1$} ;

\node [rotate=295, yscale=7, xscale=1.2] at (1.3,1.8) {\{};
\node at (1.2,2.2) {$U_0$} ;

\node at (-1,1.8) {$Y_0$} ;

\draw[fill] (2.5,1.2) circle (.04);
\draw[fill] (.5,.7) circle (.04);

\draw plot [smooth, tension=.7] coordinates {(2.5,1.2) (2,1.1) (1.5,.8) (1,.9) (.5,.7)};

\node at (2.5,1.7) {$\tilde f(x)$} ;
\node at (.5,.4) {$\Phi(x)$} ;

\end{tikzpicture}
\caption{The properness condition of Lemma \ref{lem:proper condition}.}\label{fig:proper condition}
\end{subfigure} \qquad \quad
\begin{subfigure}{.4\textwidth}
  \centering
\begin{tikzpicture}[scale=.6] 

\draw (0,0) circle (3.5);
\draw[fill=blue!40] (1,0) circle (2.5);
\draw[fill=purple!40] (2.8,0) circle (.7);

\draw[pattern = north east lines] (2.65,2.3) arc (130:230:3cm);
\draw[pattern = north east lines] (2.65,2.3) arc (41:-41:3.5cm);
\draw(2.1,2.8) arc (130:230:3.65cm);

\draw (1,0) circle (2.5);

\draw[fill] (3.5,0) circle (.05);
\draw[thick, purple, ->] (4,-.9) -- (3.5,-.6) ;

\node at (-3,2.8) {$\tilde Y_0$};
\node at (-1,-2.1) {$B_0$};
\node[purple] at (4.4,-1) {$B_1$};

\node at (4,0.1) {$\tilde\Theta$};

\node [rotate=90, yscale=1, xscale=.8] at (1.2,-.2) {\{};

\node at (2.7,3) {$H_0$};
\node at (3.5,2.1) {$H_1$};
\node[scale=.5] at (1.25,-.5) {$\smash{>}D$};

\end{tikzpicture}
\caption{The setup for the proof of Proposition \ref{prop:proper hty}.}\label{fig:proper hty}\end{subfigure}
\end{figure}

\begin{proof}[Proof of Lemma \ref{lem:proper condition}]
Suppose that the condition given in the statement of the lemma holds. Let $K\subset Y_0$ be compact. We want to show that $\Psi^{-1}(K) \subset [0,1]\times Y_\Omega$ is compact. Without loss of generality, we can assume that $K$ is of the form $Y_0 \setminus \left( \bigcup_{i=1}^n U_0^{(i)}\right)$, where $U_0^{(i)}$ is a neighborhood of the $i^{th}$ cusp of $Y_0$.

For each $i$, pick $U_1^{(i)} \subset U_0^{(i)}$ as described in the statement of the Lemma. $Y_0 \setminus \left( \bigcup_{i=1}^n U_1^{(i)}\right)$ is compact, and $f$ is a proper map, so $f^{-1}\left( Y_0 \setminus \left( \bigcup_{i=1}^n U_1^{(i)}\right)\right)$ is compact.

Now suppose that $\Psi_t(x) \in K = Y_0 \setminus \left( \bigcup_{i=1}^n U_0^{(i)}\right)$. Then by the condition of the Lemma, $f(x) \notin \bigcup_{i=1}^n U_1^{(i)}$ and so $x\in f^{-1}\left( Y_0 \setminus \left( \bigcup_{i=1}^n U_1^{(i)}\right)\right)$, a compact set. $\Psi^{-1}(K)$ is closed, so $\Psi^{-1}(K)$ is a closed subset of $[0,1]\times f^{-1}\left( Y_0 \setminus \left( \bigcup_{i=1}^n U_1^{(i)}\right)\right)$. Therefore, $\Psi^{-1}(K)$ is compact, as desired.
\end{proof}

\begin{proof}[Proof of Proposition \ref{prop:proper hty}]

We prove the condition of Lemma \ref{lem:proper condition}. Lift a cusp $\Theta$ in $Y_0$ to a bounded parabolic point $\tilde\Theta \in \partial_\infty \tilde Y_0$. Then there exists a horoball $B_0$ in $\tilde Y_0$ centered at $\tilde \Theta$ which is contained in a lift $\tilde U_0$ of $U_0$ around $\tilde \Theta$.

Let $H_0$ be a halfspace in $\tilde Y_0$ so that $\partial_\infty H_0$ contains $\tilde \Theta$ and $\tilde Y_0 \setminus H_0$ contains a fundamental domain for the action of $\Gamma_{\tilde\Theta}$ on the horosphere $\partial B_0$. Let $D$ be the constant provided by Lemma \ref{lem:control_bar}.  Let $H_1$ be a second halfspace, chosen so that $H_1$ contains $\tilde\Theta$ and the $D$-neighborhood of $H_1$ is contained in $H_0$. Using Lemma \ref{lem:two thirds}, let $B_1$ be a horoball based at $\tilde\Theta$ inside $H_1$ such that for all $x\in \tilde f^{-1}(B_1)$, 
\[ \tilde f_* \mu_x(\partial_\infty H_1) > \frac{2}{3}\Vert \mu_x\Vert. \]
By Lemma \ref{lem:control_visual}, we can shrink $B_1$ if needed so that for all $y\in B_1$, we have moreover that $\nu_y(\partial_\infty H_1) >\frac{2}{3}.$  (See Figure \ref{fig:proper hty}.) Let $U_1$ be the projection of $B_1$ to $Y_0$.

Now suppose that $\tilde f(x)\in B_1$, i.e., $x$ projects to a point in $Y_\Omega$ which maps to $U_1$ under $f$. Then $\tilde f(x)\in B_1$, $\tilde f_*\mu_x(\partial_\infty H)>\frac{2}{3}\Vert \mu_x\Vert$, and $\nu_{\tilde f(x)}(\partial_\infty H) >\frac{2}{3}$. Therefore, for all $t\in[0,1]$, 
\begin{multline}
	(t \tilde f_*\mu_x + (1-t)\nu_{\tilde f(x)})(\partial_\infty H)   > t (\frac{2}{3}\Vert \mu_x\Vert) + (1-t) \frac{2}{3} \nonumber  \\
							  = \frac{2}{3}(t \Vert \mu_x \Vert + (1-t) \Vert \nu_{\tilde f(x)}\Vert ) 
							  = \frac{2}{3} \Vert t \tilde f_*\mu_x +(1-t) \nu_{\tilde f(x)} \Vert. \nonumber
\end{multline}

By Lemma \ref{lem:control_bar}, for all $t\in [0,1]$, $d_{g_0}(bar(t \tilde f_*\mu_x + (1-t)\nu_{\tilde f(x)}), H_1)<D.$ Therefore, by the choice of $H_1$, for all $t$, $\Psi_t(x) \in H_0$. Thus,  $\Psi_t(x)\in U_0$ for all $x\in f^{-1}(U_1)$, proving the condition of Lemma \ref{lem:proper condition} as desired.
\end{proof}

%

\section{Proof of the main theorem}\label{sec:main}

We are now ready to prove our main theorem, but before doing so we need a Riemannian metric on $Y_\Omega$ so that $N(F_\Omega, g)<\infty$. We have such a choice in the \emph{Blaschke metric}, also known as the {\em affine metric} (see for instance \cite[Definition 2.2]{benoisthulin14} and \cite{chengyau77,chengyau86}). This metric exists for any properly convex $\Omega$ and is easily seen to be projectively invariant, so it descends to $Y_\Omega$. Moreover, we have the following uniform comparison between the Hilbert and Blaschke metrics observed by Benoist and Hulin:

\begin{proposition}[{\cite[Proposition 3.4]{benoisthulin14}}] \label{prop:blaschke}
Given any properly convex domain $\Omega$ in $\mathbb {RP}^n$, there exists a constant $K_n\geq 1$ depending only on $n$ such that for all $v\in T\Omega$,
\[ \frac1{K_n}A_\Omega(v)\leq F_\Omega(v)\leq K_n A_\Omega(v) \]
where $A_\Omega$ is the norm defined by the Blaschke metric.
\end{proposition}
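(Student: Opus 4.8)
The plan is to exploit the fact that both $F_\Omega$ and $A_\Omega$ are projectively invariant together with Benz\'ecri's compactness theorem. Since both are norms, positively homogeneous of degree one in $v$, the ratio $F_\Omega(v)/A_\Omega(v)$ depends only on the basepoint $x$, the domain $\Omega$, and the direction of $v$. The key observation is that this ratio is invariant under the action of $G=\mathrm{PGL}(n+1,\mathbb{R})$: if $g\in G$ carries $\Omega$ to $\Omega'$, then $g$ is an isometry for both the Hilbert and the Blaschke structures, so $D_x g$ carries $(T_x\Omega,F_\Omega)$ to $(T_{gx}\Omega',F_{\Omega'})$ and likewise for $A$, leaving the ratio unchanged. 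Thus the ratio descends to a function on the space of pointed, directed properly convex domains modulo $G$, and it suffices to show that this quotient is compact and that the ratio is continuous.

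First I would set up the relevant space. Let $\mathcal{X}=\{(\Omega,x): \Omega\subset\mathbb{RP}^n \text{ properly convex},\ x\in\Omega\}$, where the $\Omega$-coordinate carries the Hausdorff topology on closed subsets of $\mathbb{RP}^n$. Benz\'ecri's compactness theorem asserts that the natural $G$-action on $\mathcal{X}$ is proper and cocompact, so $\mathcal{X}/G$ is compact. To incorporate directions, pass to the Blaschke-unit tangent bundle $\mathcal{X}^1=\{(\Omega,x,v): (\Omega,x)\in\mathcal{X},\ A_\Omega(x,v)=1\}$. Since each $A_\Omega$-unit sphere is compact ($\cong S^{n-1}$) and $G$ acts by bundle automorphisms covering its action on $\mathcal{X}$, the induced $G$-action on $\mathcal{X}^1$ is again proper and cocompact, so $\mathcal{X}^1/G$ fibers with compact fiber over the compact base $\mathcal{X}/G$ and is therefore compact. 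On $\mathcal{X}^1$ the function $(\Omega,x,v)\mapsto F_\Omega(x,v)$ equals the ratio $F_\Omega/A_\Omega$ and is $G$-invariant, hence descends to $\mathcal{X}^1/G$.

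Next I would verify continuity of this function on $\mathcal{X}^1$. Continuity of $F_\Omega(x,v)$ in the triple is elementary: the Hilbert norm is given by an explicit cross-ratio in terms of the two intersection points of the line through $x$ in direction $v$ with $\partial\Omega$, and these points vary continuously as $\Omega$ varies in the Hausdorff topology, the proper convexity keeping $x$ in the interior. The denominator $A_\Omega(x,v)$ also varies continuously; this is the delicate point, since the Blaschke metric is defined via the Cheng--Yau solution of a Monge--Amp\`ere equation on $\Omega$, so one must invoke the continuous dependence of the associated affine sphere (equivalently, of the Monge--Amp\`ere solution) on the domain in the Hausdorff topology. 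Granting this, the ratio is a continuous, strictly positive function on the compact space $\mathcal{X}^1/G$, so it attains a finite positive maximum and minimum; taking $K_n\geq 1$ to be the larger of the maximum and the reciprocal of the minimum yields $K_n^{-1}\le F_\Omega/A_\Omega\le K_n$. Because $\mathcal{X}^1/G$ depends only on the ambient dimension, so does $K_n$.

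The main obstacle is precisely the continuity of the Blaschke metric in the Hausdorff topology: unlike the Hilbert norm it has no closed-form projective expression, so its continuous dependence on $\Omega$ must be extracted from the regularity and stability theory for the Monge--Amp\`ere equation underlying the Cheng--Yau construction. Once this continuity, together with the positivity and the properness and cocompactness of the $G$-action supplied by Benz\'ecri, is in hand, the compactness argument closes the proof immediately.
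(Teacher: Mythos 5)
Your proposal is correct and follows essentially the same route as the paper's source: the paper does not prove this statement itself but cites Benoist--Hulin, whose argument is exactly your scheme --- projective invariance of both norms, Benz\'ecri's cocompactness theorem, and continuity of the Hilbert and Blaschke metrics in the Hausdorff topology (the latter being the genuinely hard input, extracted from Cheng--Yau's Monge--Amp\`ere theory, which you correctly isolate as the delicate step) --- and this is also the ``cocompactness argument following a theorem of Benzecri'' indicated in Remark \ref{rem:comparingmetrics}.
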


\begin{remark}\label{rem:comparingmetrics}
In fact, Proposition \ref{prop:blaschke} is true for any natural projectively invariant norm defined for properly convex open sets, by a cocompactness argument following a theorem of Benzecri \cite{benzecri_these}.  See also \cite[\S 9, Prop 9.7]{marquis_handbook}.
\end{remark}

\begin{remark}\label{rem:relatingthemetrics}
Under the assumption that $\Omega$ admits a discrete action by a noncompact group $\Gamma$ of projective transformations, the Blaschke and Hilbert metrics agree if and only if $\Omega$ is an ellipsoid. Since the fundamental group of a finite volume manifold is not compact, this applies to our setting.

To see this, first assume $\Omega$ is an ellipsoid. Then $\Omega$ admits a transitive action by a group of projective transformations, which are isometries for both the Blaschke metric and the Hilbert metric, hence the metrics agree in this case.

Conversely, if the metrics agree, then the Hilbert metric is a {\em regular Finsler metric}, meaning the norm is $C^2$ with positive definite Hessian, since the Blaschke metric is in fact analytic and positive definite (\cite{chengyau77}, or see \cite[\S 1.2]{tholozan17} for the statements in less generality which is relevant here). If the Hilbert norm is a regular Finsler norm then either $\Omega$ is an ellipsoid or has a compact isometry group (\cite{sociemethou}, see also \cite[Theorem 2.2]{crampon_handbook}). By assumption, it follows that $\Omega$ is an ellipsoid. 
\end{remark}

\begin{lemma}
Let $g_\Omega$ be a family of projectively invariant Riemannian metrics on properly convex domains $\Omega$ in $\mathbb {RP}^n$. Then there is a constant $K_n(g_\Omega)$ such that for all properly convex domains $\Omega$, 
  \[
    K_n^{-2n}\leq N(F_\Omega,g_\Omega) \leq K_n^{2n}
  \]
where $N(F_\Omega,g_\Omega)$ is the eccentricity factor from Definition \ref{defn:distortion}. 
  \label{lem:eccentricities_are_bounded}
\end{lemma}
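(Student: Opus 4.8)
The plan is to reduce the bound on the eccentricity factor $N(F_\Omega, g_\Omega)$ to a pointwise comparison between the Finsler norm $F_\Omega$ and the Riemannian norm defined by $g_\Omega$, using only the definition of $N$ and an elementary estimate on how norm balls distort volume under a bounded change of norm. The key input is the uniform two-sided comparison $\frac{1}{K_n} g_\Omega\text{-norm} \leq F_\Omega \leq K_n \cdot g_\Omega\text{-norm}$, which holds for \emph{any} projectively invariant Riemannian metric $g_\Omega$ by the cocompactness/Benz\'ecri argument recorded in Remark \ref{rem:comparingmetrics} (the statement of Proposition \ref{prop:blaschke} is the Blaschke case, but the same proof yields a constant $K_n = K_n(g_\Omega)$ for the given family). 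So the first step is to invoke this comparison to fix the constant $K_n(g_\Omega)$.

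\medskip

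\noindent Next I would unwind Definition \ref{defn:distortion}. Fix $y \in Y_\Omega$ and a $g_\Omega$-unit vector $v \in S_{g_\Omega}(1,y)$. The factor being maximized is
\[
  \frac{F_\Omega(v)^n \, \Vol_{g_\Omega}(B_{F_\Omega}(1,y))}{\Vol_{g_\Omega}(B_{g_\Omega}(1,y))}.
\]
The comparison gives three separate estimates. First, since $v$ is a $g_\Omega$-unit vector, $\frac{1}{K_n} \leq F_\Omega(v) \leq K_n$, so $F_\Omega(v)^n \leq K_n^n$. Second, the inclusion $F_\Omega \leq K_n \cdot (g_\Omega\text{-norm})$ means the $g_\Omega$-ball of radius $1/K_n$ is contained in the $F_\Omega$-unit ball, while $F_\Omega \geq \frac{1}{K_n}(g_\Omega\text{-norm})$ means $B_{F_\Omega}(1,y)$ is contained in the $g_\Omega$-ball of radius $K_n$; hence
\[
  \Vol_{g_\Omega}(B_{F_\Omega}(1,y)) \leq \Vol_{g_\Omega}(B_{g_\Omega}(K_n,y)) = K_n^n \, \Vol_{g_\Omega}(B_{g_\Omega}(1,y)),
\]
so the ratio of the two volume terms in the fraction is at most $K_n^n$. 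Multiplying the two contributions gives the upper bound $N(F_\Omega, g_\Omega) \leq K_n^n \cdot K_n^n = K_n^{2n}$. The lower bound $N(F_\Omega, g_\Omega) \geq K_n^{-2n}$ is symmetric: bound $F_\Omega(v)^n \geq K_n^{-n}$ and $\Vol_{g_\Omega}(B_{F_\Omega}(1,y)) \geq K_n^{-n}\Vol_{g_\Omega}(B_{g_\Omega}(1,y))$ using the reverse inclusions, and note $N \geq 1 \geq K_n^{-2n}$ already follows from the remark after Definition \ref{defn:distortion} — indeed the genuinely useful content is the upper bound, which is what forces $N(F_\Omega, g_\Omega) < \infty$ uniformly.

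\medskip

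\noindent The main point to get right — and the only step requiring care rather than bookkeeping — is the uniformity of $K_n(g_\Omega)$ in $\Omega$, i.e.\ that a \emph{single} constant works for all properly convex domains simultaneously. This is exactly what the Benz\'ecri cocompactness argument delivers: the space of pointed properly convex domains modulo $\PSL(n+1,\mathbb{R})$ is compact, and both $F_\Omega$ and the projectively invariant $g_\Omega$ vary continuously and are $\PSL(n+1,\mathbb{R})$-equivariant, so the pointwise ratio $F_\Omega(v)/(g_\Omega\text{-norm of }v)$ attains a positive finite max and min over this compact quotient, yielding a dimension-only constant. Once uniformity of $K_n$ is granted, the rest is the elementary norm-ball volume estimate above, and the two-sided bound on $N(F_\Omega, g_\Omega)$ follows immediately.
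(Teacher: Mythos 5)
Your proposal is correct and takes essentially the same route as the paper's proof: both invoke the uniform comparison constant $K_n$ for projectively invariant metrics (Remark \ref{rem:comparingmetrics}, via Benz\'ecri cocompactness) and then deduce the pointwise bound
\[
K_n^{-2n}\;\leq\; \frac{F_\Omega(v)^n\,\Vol_{g_\Omega}(B_{F_\Omega}(1,y))}{\Vol_{g_\Omega}(B_{g_\Omega}(1,y))}\;\leq\; K_n^{2n}
\]
from the norm estimate on $F_\Omega(v)$, the two ball inclusions $B_{g_\Omega}(K_n^{-1},y)\subseteq B_{F_\Omega}(1,y)\subseteq B_{g_\Omega}(K_n,y)$, and the Riemannian scaling of ball volumes. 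The paper merely compresses your three separate estimates into a single chain of inequalities.
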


\begin{proof}
For the proof, we suppress the subscripts and let $F=F_\Omega$ denote the Finsler Hilbert norm on $\Omega$ and $g=g_\Omega$ the Riemannian metric. Since the metric $g$ is projectively invariant, let $K=K_n(g)$ be the uniform constant comparing these metrics given in Remark \ref{rem:comparingmetrics}. Then for all $y\in Y_\Omega$ and $v\in S_g(1,y)$, since $g$ is Riemannian, 
  \begin{multline*}
    K^{-2n}\leq \frac{(K^{-1})^n\Vol_g(B(K^{-1},y))}{\Vol_g(B(1,y))}\leq 
    \frac{F(v)^n\Vol_g(B_F(1,y))}{\Vol_g(B_g(1,y))} 
    \\
    \leq \frac{K^n\Vol_g(B_g(K,y))}{\Vol_g(1,y)}\leq K^{2n}.
  \end{multline*}
The result follows. 
\end{proof}

Note that by Remark \ref{rem:relatingthemetrics}, we can choose $K_n(A_\Omega)=1$ where $A_\Omega$ is the Blaschke metric if and only if $\Omega$ is an ellipsoid, and hence $N_\Omega=1$ if and only if $(\tilde Y_\Omega,F_\Omega)$ and $(\tilde Y_0,g_0)$ are already isometric.

We now prove our main theorem and its corollary:

\begin{thm}\label{thm:rigidity}
Let $Y_\Omega$ be a finite volume convex projective manifold of dimension $n\geq 3$, equipped with its Hilbert metric. Suppose that $Y_0$ is a hyperbolic structure on the same manifold. Then 
\[  N_\Omega h(F_\Omega)^n \Vol(Y, F_\Omega) \geq h(g_0)^n \Vol(Y, g_0) \]
where $N_\Omega:=N_\Omega(F_\Omega, A_\Omega)\geq 1$ is the eccentricity factor of the Hilbert metric relative to the Blaschke metric. 

Furthermore, equality holds if and only if $(Y, F_\Omega)$ is isometric to $(Y, g_0)$.
\end{thm}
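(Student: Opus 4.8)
The plan is to integrate the pointwise Jacobian bound of Proposition \ref{prop:Jacobian bound} over $Y_\Omega$, using the properness established in Section \ref{sec:proper} to justify that the natural map $\Phi$ has degree one and that the change-of-variables formula applies. First I would fix the Blaschke metric $A_\Omega$ as the auxiliary Riemannian metric $g$; by Lemma \ref{lem:eccentricities_are_bounded} the eccentricity factor $N_\Omega = N(F_\Omega, A_\Omega)$ is finite (bounded by $K_n^{2n}$), so the bound in Proposition \ref{prop:Jacobian bound} becomes the genuine inequality $|Jac(\tilde\Phi)(y)| \leq \frac{h(F_\Omega)^n}{h(g_0)^n} N_\Omega$ with a finite right-hand side. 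This is precisely the step where finiteness of $N_\Omega$ is used, and it is why the Blaschke metric is the right choice.

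Next I would set up the volume comparison. Since $\Phi\colon Y_\Omega \to Y_0$ is proper (Proposition \ref{prop:proper hty}) and proper-homotopic to the homeomorphism $f$, it has degree $\pm 1$; orienting so the degree is $+1$, the area formula gives
\[
  \Vol(Y_0, g_0) \leq \int_{Y_\Omega} |Jac(\Phi)(y)|\, d\Vol_{F_\Omega}(y) \leq \frac{h(F_\Omega)^n}{h(g_0)^n} N_\Omega \, \Vol(Y_\Omega, F_\Omega),
\]
where the Jacobian is computed with respect to the Hilbert volume on the source and the hyperbolic volume on the target. Rearranging yields the stated inequality $N_\Omega h(F_\Omega)^n \Vol(Y,F_\Omega) \geq h(g_0)^n \Vol(Y,g_0)$. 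I would take care that the change-of-variables inequality for a degree-one proper map holds even without $\Phi$ being a diffeomorphism, since $\Phi$ covers $Y_0$ and the Jacobian controls the pullback of the volume form.

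For the equality case, I would argue that if equality holds, then $|Jac(\tilde\Phi)(y)| = \frac{h(F_\Omega)^n}{h(g_0)^n} N_\Omega$ almost everywhere, so by the rigidity clause of Proposition \ref{prop:Jacobian bound}, at almost every $y$ the differential $D_y\tilde\Phi$ is a homothety composed with an isometry from $(T_y\tilde Y_\Omega, F_\Omega)$ to $(T_{\tilde\Phi(y)}\tilde Y_0, g_0)$. In particular the Hilbert norm $F_\Omega$ must be the pullback of a scaled Euclidean (Riemannian) inner product, so $F_\Omega$ is a regular Finsler metric. By Remark \ref{rem:relatingthemetrics}, a Hilbert metric admitting a noncompact discrete projective action is regular only when $\Omega$ is an ellipsoid; hence $(\tilde Y_\Omega, F_\Omega)$ is isometric to hyperbolic space, and $N_\Omega = 1$ with $\tilde\Phi$ a genuine isometry. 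This is the simplification afforded by the Hilbert setting that the introduction advertises: rather than the delicate Riemannian equality analysis of \cite{BCG1}, regularity of the Finsler norm immediately forces the ellipsoid.

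The main obstacle I anticipate is justifying the degree-one/change-of-variables step rigorously in the noncompact, finite-volume setting — ensuring that the improper integral converges (which is where finiteness of $N_\Omega$ and finiteness of $\Vol(Y_\Omega, F_\Omega)$ combine) and that properness genuinely delivers degree one so that the covering inequality holds rather than merely a local statement. A secondary subtlety is confirming that the almost-everywhere homothety condition extends from a full-measure set to force global regularity of $F_\Omega$, since the conclusion of Remark \ref{rem:relatingthemetrics} presumes regularity everywhere; this should follow from continuity of $F_\Omega$ together with the density of the good set, but it warrants a careful word.
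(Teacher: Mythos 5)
Your proposal follows the same route as the paper's proof: integrate the Jacobian bound of Proposition \ref{prop:Jacobian bound} over $Y_\Omega$, using properness of $\Phi$ (Proposition \ref{prop:proper hty} together with properness of $f$) to justify the volume comparison, and in the equality case feed the rigidity clause of Proposition \ref{prop:Jacobian bound} into Remark \ref{rem:relatingthemetrics} (the Soci\'e-M\'ethou dichotomy) to force $\Omega$ to be an ellipsoid. Your degree-one/area-formula discussion is a more explicit rendering of what the paper compresses into ``integrating over compact exhaustions,'' and your worry about passing from almost-everywhere regularity to global regularity is resolved by the paper's observation that equality at even a \emph{single} point makes the unit sphere in that tangent space an ellipsoid, hence the Hilbert norm $C^2$ with positive definite Hessian, i.e.\ a regular Finsler norm.

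There is, however, a genuine gap at the end of your equality case. From ``$\Omega$ is an ellipsoid'' you conclude that $(\tilde Y_\Omega, F_\Omega)$ is isometric to hyperbolic space ``with $\tilde\Phi$ a genuine isometry.'' The claim that $\tilde\Phi$ is an isometry is not justified by anything you have established: proving the natural map is an isometry in the equality case is exactly the delicate part of the Riemannian BCG argument that this paper's setting allows one to avoid, and nothing in your argument delivers it. More importantly, even granting that $\Omega$ is an ellipsoid, you only know that $Y_\Omega$ carries \emph{some} hyperbolic metric; this does not by itself identify it with the given hyperbolic structure $(Y, g_0)$. The paper closes this gap by invoking Mostow-Prasad rigidity \cite{prasad}, valid for finite-volume hyperbolic manifolds of dimension $n\geq 3$, which says that the two hyperbolic structures $(Y, F_\Omega)$ and $(Y, g_0)$ on the same manifold are isometric. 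Your proof needs this citation (or an equivalent argument); with it, the argument is complete.
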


\begin{proof}
Proposition \ref{prop:proper hty}, together with the properness of $f$ immediately implies that the natural map $\Phi$ is proper. The fact that $\Phi$ is proper then allows us to compare the volumes of $Y_\Omega$ and $Y_0$ by integrating over compact exhaustions of these space and using the Jacobian bound of Proposition \ref{prop:Jacobian bound} to prove the desired inequality.

If equality holds, then the equality case of Proposition \ref{prop:Jacobian bound} must hold at almost all points. That it holds at a single point $x$ in $\tilde Y_\Omega$ tells us that the unit sphere in $T_x\tilde Y_\Omega$ is an ellipsoid, and in particular is $C^2$ with positive definite Hessian. Then the Hilbert norm is $C^2$ with positive definite Hessian, meaning it is a regular Finsler norm.  As in Remark \ref{rem:relatingthemetrics}, if the Hilbert norm on a properly convex domain $\Omega$ is $C^2$, then either $\Omega$ is an ellipsoid or has a compact isometry group (\cite{sociemethou}, see also \cite[Theorem 2.2]{crampon_handbook}). Since the fundamental group of the quotient is not compact and acts by isometries on $\tilde Y_\Omega=\Omega$, we conclude $\Omega$ must be an ellipsoid. Then the Mostow-Prasad Rigidity Theorem \cite{prasad} implies that $(Y_\Omega, F_\Omega)$ and $(Y_0,g_0)$ are isometric. 
\end{proof}

\begin{proof}[Proof of Corollary \ref{cor:uniform_volume_bound}]
By Theorem \ref{thm:rigidity}, 
  \[
    \Vol(Y,F_\Omega)\geq \left( \frac{h(g_0)}{h(F_\Omega)} \right)^n N_\Omega^{-1} \Vol(Y,g_0).
  \]
The volume growth entropy satisfies the inequality $h(F_\Omega)\leq n-1=h(g_0)$
  \cite[Theorem 2]{tholozan17}, so by Lemma \ref{lem:eccentricities_are_bounded},
  \[
    \Vol(Y,F_\Omega)\geq N_\Omega^{-1} \Vol(Y,g_0)\geq K_n^{-2n}\Vol(Y,g_0). 
  \]
Taking $\mathcal{D}=K_n^{-2n}$ finishes the proof.
\end{proof}

\bibliographystyle{alpha}
\bibliography{biblio}

\begin{thebibliography}{BCG96}

\bibitem[ABC19]{abc}
Ilesanmi Adeboye, Harrison Bray, and David Constantine.
\newblock Entropy rigidity and {H}ilbert volume.
\newblock {\em Discrete \& Continuous Dynamical Systems - A}, 39(4):1731--1744,
  2019.

\bibitem[BC20]{ballascasella}
Samuel~A. Ballas and Alex Casella.
\newblock Gluing equations for real projective structures on 3-manifolds.
\newblock 2020.
\newblock preprint, \url{https://arxiv.org/abs/1912.12508}.

\bibitem[BCG95]{BCG1}
G\'erard Besson, Gilles Courtois, and Sylvestre Gallot.
\newblock Entropies et rigidit\'es des espaces localement sym\'etriques de
  courbure strictement n\'egative.
\newblock {\em Geom. Funct. Anal.}, 5(5):731--799, 1995.

\bibitem[BCG96]{BCG2}
G\'erard Besson, Gilles Courtois, and Sylvestre Gallot.
\newblock Minimal entropy and {M}ostow's rigidity theorems.
\newblock {\em Ergodic Theory and Dynamical Systems}, 16(4):623--649, 1996.

\bibitem[BCS05]{BCS}
Jeffrey Boland, Chris Connell, and Juan Souto.
\newblock Volume rigidity for finite volume manifolds.
\newblock {\em American Journal of Mathematics}, 127(3):535--550, 2005.

\bibitem[Ben60]{benzecri_these}
Jean~Paul Benz\'ecri.
\newblock Sur les vari\'et\'es localement affines et localement projectives.
\newblock 88:229--332, 1960.
\newblock French.

\bibitem[Ben06]{Ben5}
Yves Benoist.
\newblock Convexes hyperboliques et quasiisom\'etries.
\newblock {\em Geom. Dedicata}, 122:109--134, 2006.

\bibitem[BH14]{benoisthulin14}
Yves Benoist and Dominique Hulin.
\newblock Cubic differentials and hyperbolic convex sets.
\newblock {\em Journal of Differential Geometry}, 98(1):1--19, 2014.

\bibitem[BM16]{ballasmarquis}
Samuel~A Ballas and Ludovic Marquis.
\newblock Properly convex bending of hyperbolic manifolds.
\newblock 10/9/2016.
\newblock preprint, to appear in Groups, Geom, and Dynam.

\bibitem[BN01]{boland-newberger}
Jeff Boland and Florence Newberger.
\newblock Minimal entropy rigidity for {F}insler manifolds of negative flag
  curvature.
\newblock {\em Ergodic Theory and Dynamical Systems}, 21(1):13--23, 2001.

\bibitem[CF03]{CFrecent}
Christopher Connell and Benson Farb.
\newblock Some recent applications of the barycenter method in geometry.
\newblock In {\em Topology and Geometry of Manifolds}, volume~71 of {\em Proc.
  Sympos. Pure Math.}, pages 19--50. American Math Society, 2003.

\bibitem[CLT15]{clt}
D~Cooper, D.D Long, and S~Tillmann.
\newblock On convex projective manifolds and cusps.
\newblock {\em Advances in Mathematics}, 277:181--251, 2015.

\bibitem[CM14a]{cramponmarquis_finitude}
Micka\"{e}l Crampon and Ludovic Marquis.
\newblock Finitude g\'{e}om\'{e}trique en g\'{e}om\'{e}trie de {H}ilbert.
\newblock {\em Ann. Inst. Fourier (Grenoble)}, 64(6):2299--2377, 2014.

\bibitem[CM14b]{cramponmarquis14}
Micka{\"e}l Crampon and Ludovic Marquis.
\newblock Le flot g\'eod\'esique des quotients g\'eom\'etriquement finis des
  g\'eom\'etries de {H}ilbert.
\newblock {\em Pacific J. Math.}, 268(2):313--369, 2014.

\bibitem[Cra11]{cramponthese}
Micka{\"e}l Crampon.
\newblock {\em Dynamics and entropies of {H}ilbert metrics}.
\newblock Institut de Recherche Math\'ematique Avanc\'ee, Universit\'e de
  Strasbourg, Strasbourg, 2011.
\newblock Th{\`e}se, Universit{\'e} de Strasbourg, Strasbourg, 2011.

\bibitem[Cra14]{crampon_handbook}
Micka\"{e}l Crampon.
\newblock The geodesic flow of {F}insler and {H}ilbert geometries.
\newblock In {\em Handbook of {H}ilbert geometry}, volume~22 of {\em IRMA Lect.
  Math. Theor. Phys.}, pages 161--206. Eur. Math. Soc., Z\"{u}rich, 2014.

\bibitem[CY77]{chengyau77}
Shiu~Yuen Cheng and Shing~Tung Yau.
\newblock On the regularity of the {M}onge-{A}mp\`ere equation {${\rm
  det}(\partial ^{2}u/\partial x_{i}\partial sx_{j})=F(x,u)$}.
\newblock {\em Comm. Pure Appl. Math.}, 30(1):41--68, 1977.

\bibitem[CY86]{chengyau86}
Shiu~Yuen Cheng and Shing-Tung Yau.
\newblock Complete affine hypersurfaces. {I}. {T}he completeness of affine
  metrics.
\newblock {\em Comm. Pure Appl. Math.}, 39(6):839--866, 1986.

\bibitem[JM87]{johnsonmillson}
Dennis Johnson and John~J. Millson.
\newblock Deformation spaces associated to compact hyperbolic manifolds.
\newblock In {\em Discrete groups in geometry and analysis ({N}ew {H}aven,
  {C}onn., 1984)}, volume~67 of {\em Progr. Math.}, pages 48--106. Birkh\"auser
  Boston, Boston, MA, 1987.

\bibitem[Kap07]{Kapovich2007}
Michael Kapovich.
\newblock Convex projective structures on {G}romov-{T}hurston manifolds.
\newblock {\em Geometry \& Topology}, 11:1777--1830, 2007.

\bibitem[Mar12]{marquis12}
Ludovic Marquis.
\newblock {Exemples de vari{\'e}t{\'e}s projectives strictement convexes de
  volume fini en dimension quelconque}.
\newblock {\em {L'Enseignement Math{\'e}matique}}, Tome 58:p. 3--47, 2012.
\newblock L'enseignement math{\'e}matique (2) 58 (2012) p3-47.

\bibitem[Mar14]{marquis_handbook}
Ludovic Marquis.
\newblock Around groups in {H}ilbert geometry.
\newblock In {\em Handbook of {H}ilbert geometry}, volume~22 of {\em IRMA Lect.
  Math. Theor. Phys.}, pages 207--261. Eur. Math. Soc., Z\"urich, 2014.

\bibitem[Nie15]{nie}
Xin Nie.
\newblock On the {H}ilbert geometry of simplicial {T}its sets.
\newblock {\em Ann. Inst. Fourier (Grenoble)}, 65(3):1005--1030, 2015.

\bibitem[Pra73]{prasad}
Gopal Prasad.
\newblock Strong rigidity of {Q}-rank 1 lattices.
\newblock {\em Inventiones Mathematicae}, 21(4):255--286, 1973.

\bibitem[PT14]{Hilberthandbook}
A.~Papadopoulos and M.~Troyanov, editors.
\newblock {\em Handbook of {H}ilbert geometry}, volume~22 of {\em IRMA Lectures
  in Mathematics and Theoretical Physics}.
\newblock European Mathematical Society, 2014.

\bibitem[SM02]{sociemethou}
E~Socie-Methou.
\newblock Caract\'erisation des ellipso\"ides par leurs groupes
  d'automorphismes.
\newblock {\em Annales Scientifiques de l'\'Ecole Normale Sup{\'e}rieure},
  35(4):537--548, 2002.

\bibitem[Sto06]{storm}
P.A. Storm.
\newblock The minimal entropy conjecture for nonuniform rank one lattices.
\newblock {\em Geom. Funct. Anal.}, 16:959--980, 2006.

\bibitem[Tho17]{tholozan17}
Nicolas Tholozan.
\newblock Volume entropy of {H}ilbert metrics and length spectrum of {H}itchin
  representations into {${\rm PSL}(3,\Bbb{R})$}.
\newblock {\em Duke Math. J.}, 166(7):1377--1403, 2017.

\bibitem[Zha15]{zhang_degeneration}
Tengren Zhang.
\newblock The degeneration of convex $\mathbb{RP}^2$ structures on surfaces.
\newblock {\em Proc. Lond. Math. Soc.}, 111(5):967--1012, 2015.

\end{thebibliography}

\end{document}